\DeclareMathOperator{\Ima}{Im}
\DeclareMathOperator*{\rnk}{rnk}
\begin{document}

\title{On Weighted Simplicial Homology 
}


\author{Thomas  J. X. Li \and
	Christian M. Reidys 
}


\institute{Thomas  J. X. Li  \at
	Biocomplexity Institute, University of Virginia, Charlottesville, VA \\
	\email{jl9gx@virginia.edu}           
	\and 
	Christian M. Reidys \at Biocomplexity Institute and Department of Mathematics, University of Virginia, Charlottesville, VA\\
	\email{cmr3hk@virginia.edu}
}

\date{Received: date / Accepted: date}

\maketitle

\begin{abstract}
	We develop a framework for computing the homology of weighted simplicial complexes with coefficients in a discrete valuation
	ring. A weighted simplicial complex, $(X,v)$,  introduced by Dawson [Cah. Topol. G\'{e}om. Diff\'{e}r. Cat\'{e}g. 31 (1990), pp. 229--243], is a simplicial complex, $X$, together with an integer-valued function, $v$, assigning 
	weights to simplices, such that the weight of any of faces are monotonously increasing. In addition, weighted homology, $H_n^v(X)$, features
	a new boundary operator, $\partial_n^v$.	
	In difference to Dawson, our approach is centered at a natural homomorphism $\theta$ of weighted chain complexes.
	The key object is $H^v_{n}(X/\theta)$, the weighted homology of a quotient of chain complexes induced by $\theta$, appearing
	in a long exact sequence linking weighted  homologies with different weights.
	We shall construct bases for the kernel and image of the weighted boundary map, identifying $n$-simplices as either $\kappa_n$- 
	or $\mu_n$-vertices. Long exact sequences of weighted homology groups and the bases, allow us to prove a structure theorem for the 
	weighted simplicial homology with coefficients in a ring of formal power series $R=\mathbb{F}[[\pi]]$, where $\mathbb{F}$ is a field.  
	Relative to simplicial homology new torsion arises and we shall show that the torsion modules are connected to a pairing between 
	distinguished $\kappa_n$ and $\mu_{n+1}$ simplices.
	\keywords{simplicial homology \and weighted homology \and  exact sequence  \and  primary module  \and  bijection}
	\subclass{05E45  \and 55U10 \and 55N35}
\end{abstract}

\section{Introduction}


Topology aside, the concept of simplicial complexes is of central importance in a variety of fields including
data analysis and biology. Many real world data-sets exhibit a simplicial structure~\cite{Moore:12,Ramanathan:11,Lin:05} 
and indeed have been organized as such~\cite{Carlsson:09,Spivak:09,Giusti:16}.  While the arising simplicial complexes can 
straightforwardly be studied via topological data analysis (TDA)~\cite{Zomorodian:04,Carlsson:09,Wasserman:18}, a prevalent feature 
of data-sets is the presence of additional simplex-specific data~\cite{Ebli:20}. 

Dawson introduced in 1990~\cite{Dawson:90} the concept of a weighted simplicial complex as a simplicial complex equipped 
with a function $v \colon X \rightarrow R$ such that for simplices $\sigma,\tau 
\in X$ with $\sigma\subseteq \tau $, we have $ v(\sigma)|v(\tau)$. Dawson focused on establishing 
the Eilenberg-Steenrod axioms based on a weighted version of the Mayer-Vietoris sequence and provided a category-theory 
centered treatment. The key difference between standard and weighted simplicial complexes lies in the weighted boundary 
operator that incorporates the weight-function $v$
$$
d^v_n(\sigma)=\sum_{i=0}^n  \frac{v(\sigma)}{v(\hat{\sigma}_i)}\cdot (-1)^i\hat{\sigma}_i,
$$
where $\sigma$ is a $n$-simplex and $\hat{\sigma}_i$ denotes the $i$-th face of $\sigma$. By assumption $v(\hat{\sigma}_i)|v({\sigma})$, 
whence $d^v_n$ is a well-defined boundary map.

Subsequent contributions of Ren \textit{et al}.~\cite{Ren:18} were more application focused, where an extension of Dawsons 
framework to a persistent homology of weighted simplicial complexes was presented, followed by \cite{Wu1:19}, where weighted 
Laplacians were introduced.

Bura \textit{et al}.~\cite{Bura_weighted_21} studied the homology of certain weighted simplicial complexes with coefficients in 
discrete valuation rings, arising from the intersections of loops of a pair of RNA secondary structures~\cite{Bura:21}. \cite{Bura_weighted_21} 
connected weighted simplicial homology with simplicial homology via short exact sequences and a certain chain maps $\theta$. These chain 
maps originated from the inflation map defined in \cite{Bura_weighted_21} that allowed to compute the first weighted homology 
group.

To illustrate how weighted complexes naturally arise and reflecting on~\cite{Dawson:90} and~\cite{Bura_weighted_21}, 
we shall have a closer look at research collaboration networks. These exhibit a simplicial complex structure as follows: 
researchers are considered vertices, and a $n$-simplex in-between $n+1$ researchers appears if those researchers appeared together 
as authors on a paper (by themselves or among others), see Fig.~\ref{F:example}.
 
However, important features cannot be expressed via the simplicial structure alone, as, for instance, the citation number of a simplex. 
i.e.~the number of citations the $n+1$ authors appeared on together. For each simplex, this integer constitutes a 
weight and, by construction, the weight of a face of a simplex is larger than or equal to its weight. The weight 
of  a face, however, does not necessarily divide the weight of its simplex and as a result the weighted homology theory put forward by~\cite{Dawson:90,Ren:18} is not immediately applicable. To incorporate this type of integer-valued weights, arising in a plethora 
of real-world data, we  follow~\cite{Bura_weighted_21} and work with homology with coefficients in discrete valuation rings.


\begin{definition}
	A \emph{weighted simplicial complex}
	is a pair $(X,\omega)$ consisting of a simplicial complex $X$ and a non-negative integer
	function $\omega : X \rightarrow \mathbb{N}$ satisfying 
	\[
	\sigma\subseteq \tau \implies \omega(\sigma)\geq \omega (\tau),
	\]
	for simplices $\sigma,\tau \in X$.
\end{definition}

Given an integral domain $R$ with $\pi\in R\setminus \{0\}$, $(X,\omega)$ naturally induces a weight function $v$ by setting $v(\sigma)=\pi^{\omega(\sigma)}$ and taking the reciprocal of the coefficients in  $d^v_n$ produces the weighted boundary 
map $\partial^v_n:C_{n}(X,R)\to C_{n-1}(X,R)$, 
$$
\partial^v_n(\sigma)=\sum_{i=0}^n  \frac{v(\hat{\sigma}_i)}{v(\sigma)}\cdot (-1)^i\hat{\sigma}_i= \sum_{i=0}^n \pi^{\omega(\hat{\sigma}_i) -\omega(\sigma)} \cdot (-1)^i\hat{\sigma}_i,
$$
where the \emph{weighted chain complex} $C_{n}(X,R)$ is the free $R$-module generated by
all $n$-simplices of $X$.

The \emph{weighted homology}  $H_{n}^v(X)$ of  $(X,\omega)$ is then the $R$-module
$H_{n}^v(X) =\ker \partial_{n}^v / \Ima \partial_{n+1}^v $. Clearly, weighted homology is a generalization of the
standard simplicial homology, since the weighted homology of a complex having constant weighting is isomorphic 
to its simplicial homology.
Furthermore, it is straightforward to see that, over discrete valuation rings, weighted complexes defined via $d^v_n$ and $\partial^v_n$ produce equivalent homology theories.


\begin{figure}[h]\label{F:example}
	\centering
	\includegraphics[width=0.7\textwidth]{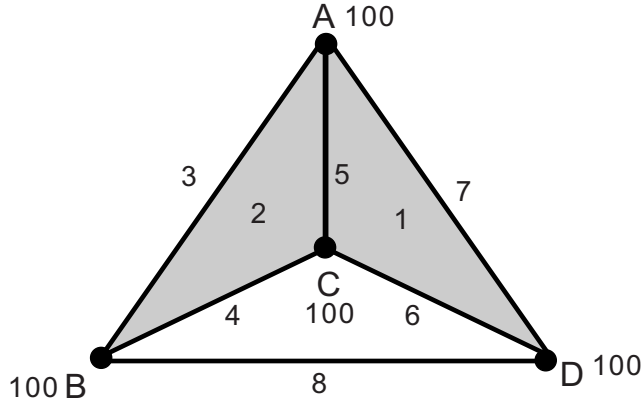}
	\caption
	{\small 
	Weighted simplicial complex, $(X,\omega)$, of a research collaboration network composed by filled (gray) and
	empty (white) triangles.
Suppose $A,B,C,D$ represent four authors that have not appeared as co-authors on any papers, however, $\{A,B,C\}$ or $\{A,C,D\}$ have written papers together. Suppose that $\{A,B,C\}$  has been cited twice, while $\{A,C,D\}$ has been cited once, i.e., $\omega(ABC)=2$ and $\omega(ACD)=1$. Furthermore, any pair appears as authors on some paper, such that the respective citation numbers are given by 
$\omega(AB)=3, \omega(BC)=4,\omega(AC)=5,\omega(CD)=6,\omega(AD)=7,\omega(BD)=8$. Furthermore, suppose each individual author has been cited $100$ times. Then the first simplicial homology group of the complex is given by $H_1(X)\cong \mathbb{Z}$ and
the first weighted homology group is given by $H_{1}^v(X)\cong R\oplus R/(\pi)\oplus R/(\pi^4)$. 
Note that the free submodule of $H_{1}^v(X)$ satisfies $\rnk H_{1}^v(X) =\rnk  H_1(X)$ and the torsion is determined by the differences of 
citation numbers between pairs of simplices $  AB,ABC$ and $AC,ACD$. 
}
\end{figure}

In this paper, we establish an exact sequence relating simplicial and weighted simplicial complexes.
The main result of this paper is a structure theorem for the weighted simplicial  homology with coefficients in
a ring  $R$ of formal power series over a field $\mathbb{F}$, i.e., $R=\mathbb{F}[[\pi]]$. 

To this end, we utilize the chain map $\theta$, which produces homomorphisms between weighted 
homology groups with respect to different weights on the same simplicial complex. The $\theta$-map  generalizes 
the inflation map employed to relate simplicial and weighted homology in case of bi-structures~\cite{Bura_weighted_21}.
In case $R=\mathbb{Q}[[\pi]]$, we prove that $\theta$  is an  injective mapping from simplicial homology with 
integer coefficients  to the weighted homology over $R$ if and only if the integral simplicial homology has no torsion.
The $\theta$-map gives rise to new homology groups, $H^v_{n}(X/\theta)$, constructed via quotients of chain 
complexes. We establish a long exact sequence linking weighted homologies having two different weights, connected via
$H^v_{n}(X/\theta)$. Here $H^v_{n}(X/\theta)$ is a weighted analogue of the relative homology of a pair and our long exact 
sequence is a weighted analogue of  the long exact sequence for a pair. In case of $R=\mathbb{F}[[\pi]]$, we proceed by 
constructing distinguished bases for the kernel $H^v_{n}(X^n)$ and the image 
$\partial_n^v(X)$ of the weighted boundary map. Such bases do not exist in homology with integer coefficients and split
the set of $n$-simplices into $\kappa_n$- and $\mu_n$-simplices. 
We provide an algorithm producing $n$-cycles $\hat\beta_{\kappa}$ each of which containing exactly one distinguished 
$\kappa_n$-simplex such that $\{\hat\beta_{\kappa_n}\mid \kappa_n\}$ forms a basis of $H^v_{n}(X^n)$ and the set of 
$\mu_n$-simplices forms a basis of $\partial_n^v(X)$. We show that the coefficients of $\hat\beta_{\kappa_n}$ can be reduced to $\mathbb{F}$, which in turn, using Nakayama's Lemma, facilitates the efficient computation of weighted 
homology modules \cite{softwareNeelav}.
We are then in position to prove the structure theorem for the weighted
simplicial  homology. Specifically,  we shall prove that the rank of the weighted simplicial  homology equals that of the 
simplicial  homology with coefficients in $R$, and provide a combinatorial interpretation for the torsion of weighted 
homology.  We show that there exists a pairing between $\kappa_n$-  and $\mu_{n+1}$-simplices of dimension $n$ and $(n+1)$, 
such that the torsion modules stem from primary ideals determined by the difference of weights of each respective pair.

We finally present a case study, where we apply the structure theorem to RNA bi-structures~\cite{Bura_weighted_21}. 
This produces a different, short proof for the weighted homology of the loop complex of an RNA bi-secondary structure~\cite{Bura_weighted_21}.

The paper is organized as follows: in Section~\ref{S:property}, we show that $\theta\colon H_n(X)\rightarrow H_n^v(X)$ is injective if and 
only if $H_n(X)$ has no torsion and establish a long exact sequence for weighted homologies having different weights.
 In Section~\ref{S:com}, we construct the $\kappa_n$- and $\mu_n$-basis for the kernel and image of the weighted boundary map, 
 $\partial_n^v$. In Section{~\ref{S:main} we prove the structure theorem for weighted homology and  in Section~\ref{S:case}, we apply our 
 results to RNA bi-structures.

\section{First properties of weighted homology}\label{S:property}

Given 	weighted complexes $(X,v')$ and $(X,v)$, we define $\theta_n^{v',v}\colon C_{n}(X,R)\xrightarrow{} C_{n}(X,R)$ 
by $\theta_n^{v',v}(\sigma)=\frac{v(\sigma)}{v'(\sigma)} \sigma$. By abuse of notation we shall write 
$\theta_n=\theta_n^{v',v}$.

\begin{lemma}\label{L:commute}
	Let $\theta_n:C_{n}(X,R)\xrightarrow{} C_{n}(X,R)$, 
	$\theta_n(\sigma)=\frac{v(\sigma)}{v'(\sigma)} \sigma$, 
	then we have the commutative diagram
	\[
\diagram
\cdots  \rto & C_{n}(X,R)   \rto^{\partial^{v'}_n} \dto^{\theta_{n}} & C_{n-1}(X,R)  
\rto  \dto^{ \theta_{n-1}} &\cdots\\
\cdots 	\rto & C_{n}(X,R)  	\rto^{\partial^v_n} & C_{n-1}(X,R)  	\rto &\cdots
\enddiagram
\]
\end{lemma}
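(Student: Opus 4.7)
The plan is to verify commutativity of each square by direct calculation on the generators, exploiting $R$-linearity of all maps involved. Since the chain groups $C_n(X,R)$ are free $R$-modules on the $n$-simplices, it is enough to check that $\partial_n^v(\theta_n(\sigma)) = \theta_{n-1}(\partial_n^{v'}(\sigma))$ for an arbitrary $n$-simplex $\sigma$.

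First, I would unwind the left-hand side. Applying the definition of $\theta_n$ and then pulling the scalar $\frac{v(\sigma)}{v'(\sigma)}$ through the $R$-linear map $\partial_n^v$, one obtains
\[
\partial_n^v(\theta_n(\sigma)) \;=\; \frac{v(\sigma)}{v'(\sigma)}\sum_{i=0}^{n}(-1)^i\,\frac{v(\hat{\sigma}_i)}{v(\sigma)}\,\hat{\sigma}_i \;=\; \sum_{i=0}^{n}(-1)^i\,\frac{v(\hat{\sigma}_i)}{v'(\sigma)}\,\hat{\sigma}_i,
\]
where the key cancellation is $\frac{v(\sigma)}{v'(\sigma)}\cdot\frac{v(\hat{\sigma}_i)}{v(\sigma)}=\frac{v(\hat{\sigma}_i)}{v'(\sigma)}$.

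Next, I would compute the right-hand side by applying $\partial_n^{v'}$ first and then $\theta_{n-1}$ termwise:
\[
\theta_{n-1}(\partial_n^{v'}(\sigma)) \;=\; \sum_{i=0}^{n}(-1)^i\,\frac{v'(\hat{\sigma}_i)}{v'(\sigma)}\,\theta_{n-1}(\hat{\sigma}_i) \;=\; \sum_{i=0}^{n}(-1)^i\,\frac{v'(\hat{\sigma}_i)}{v'(\sigma)}\cdot\frac{v(\hat{\sigma}_i)}{v'(\hat{\sigma}_i)}\,\hat{\sigma}_i \;=\; \sum_{i=0}^{n}(-1)^i\,\frac{v(\hat{\sigma}_i)}{v'(\sigma)}\,\hat{\sigma}_i,
\]
so the two expressions match term by term.

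There is essentially no obstacle; the only subtle point worth flagging is well-definedness of the scalars $\frac{v(\sigma)}{v'(\sigma)}$ and $\frac{v(\hat{\sigma}_i)}{v'(\hat{\sigma}_i)}$ as elements of $R$. This requires the (implicit) hypothesis that $v'(\tau)\mid v(\tau)$ for every simplex $\tau$, which holds under the natural assumption $\omega'\leq\omega$ pointwise when $v=\pi^{\omega}$ and $v'=\pi^{\omega'}$. Once that is noted, the verification above is complete and the commutativity of every square in the ladder follows uniformly in $n$.
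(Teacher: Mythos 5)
Your proof is correct and follows essentially the same route as the paper's: a direct check on generators that both composites equal $\sum_{i}(-1)^i\,\frac{v(\hat{\sigma}_i)}{v'(\sigma)}\,\hat{\sigma}_i$. Your added remark on the divisibility hypothesis needed for the scalars $\frac{v(\tau)}{v'(\tau)}$ to lie in $R$ is a worthwhile point the paper leaves implicit, but it does not alter the argument.
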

\begin{proof}
	Clearly, 
	$$
	\theta_{n-1}^{v',v}\circ \partial_n^{v'}(\sigma)  = \sum_{i=0}^n (-1)^i 
	\frac{v(\hat\sigma_i)}{v'(\hat\sigma_i)}\cdot \frac{v'(\hat\sigma_i)}{v'(\sigma)} 
	\cdot \hat\sigma_i=\sum_{i=0}^n (-1)^i \frac{v(\sigma)}{v'(\sigma)} 
	\frac{v(\hat\sigma_i)}{v(\sigma)}\hat\sigma_i=
	\partial_n^{v}\circ \theta_{n}^{v',v}(\sigma).
	$$
\end{proof}

Since the $\theta_n$ are chain maps, they induce homomorphisms
\begin{lemma}\label{L:induced}
	The chain maps $\theta_{n}$ induce natural homomorphisms
	$$
	\bar\theta_n\colon H_{n}^{v'}(X) \longrightarrow H^v_{n}(X),\quad 
	\bar\theta_n(\sum_ja_j\sigma_j+\text{\rm Im }\partial^{v'}_{n+1})=
	\theta_n(\sum_ja_j \sigma_j) +\text{\rm Im }\partial^v_{n+1}.
	$$
\end{lemma}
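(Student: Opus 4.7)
The plan is to verify the standard two-step criterion for a chain map to descend to homology: that $\theta_n$ sends $v'$-cycles to $v$-cycles and $v'$-boundaries to $v$-boundaries. First, I would pick a cycle $z \in \ker\partial_n^{v'}$ and apply the commutative square of Lemma~\ref{L:commute} to obtain
\[
\partial_n^v\!\left(\theta_n(z)\right) \;=\; \theta_{n-1}\!\left(\partial_n^{v'}(z)\right) \;=\; \theta_{n-1}(0) \;=\; 0,
\]
so $\theta_n(z)\in\ker\partial_n^v$. Next, for a boundary $b=\partial_{n+1}^{v'}(c)$, the commutative square one dimension higher gives
\[
\theta_n(b) \;=\; \theta_n\!\left(\partial_{n+1}^{v'}(c)\right) \;=\; \partial_{n+1}^v\!\left(\theta_{n+1}(c)\right) \;\in\; \mathrm{Im}\,\partial_{n+1}^v.
\]
Hence two $v'$-cycles that differ by an element of $\mathrm{Im}\,\partial_{n+1}^{v'}$ are mapped to two $v$-cycles that differ by an element of $\mathrm{Im}\,\partial_{n+1}^v$, which is exactly the statement that the map on cosets
\[
\bar\theta_n\!\left(\sum_j a_j\sigma_j + \mathrm{Im}\,\partial_{n+1}^{v'}\right) \;=\; \theta_n\!\left(\sum_j a_j\sigma_j\right) + \mathrm{Im}\,\partial_{n+1}^v
\]
is well defined.

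It remains to check that $\bar\theta_n$ is an $R$-module homomorphism and that it is natural. The homomorphism property is inherited from $\theta_n$, which is $R$-linear on generators since $\theta_n(\sigma)=\frac{v(\sigma)}{v'(\sigma)}\sigma$ with $\frac{v(\sigma)}{v'(\sigma)}\in R$ (a consequence of the weight divisibility condition, equivalently $\omega'(\sigma)\leq \omega(\sigma)$). Naturality with respect to chain-level constructions comes for free: if a third weight $v''$ is inserted, the associativity $\theta_n^{v',v''}\circ \theta_n^{v,v'}=\theta_n^{v,v''}$ holds already at the chain level, and therefore descends to homology. There is no genuine obstacle here beyond the bookkeeping of the divisibility that ensures $\theta_n$ has coefficients in $R$; everything else is a direct consequence of Lemma~\ref{L:commute}.
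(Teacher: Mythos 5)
Your proof is correct and is precisely the standard argument the paper relies on: the paper gives no explicit proof of this lemma, simply remarking that chain maps induce homomorphisms on homology, and your verification (cycles to cycles, boundaries to boundaries, both via the commutative square of Lemma~\ref{L:commute}) spells out exactly that. No discrepancy in approach.
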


The next proposition is straightforward to verify: 

\begin{proposition}\label{P:homcom}
	Suppose we have two chain complexes such that $i^v \circ \theta_{n,A}= 
	\theta_{n,B}\circ i^{v'}$ and $j^v \circ \theta_{n,B}= \theta_{n,C}\circ j^{v'}$, 
	i.e.~we have the commutative diagram
	$$
	\diagram 0 \rto & C_{n}(A,R) \rto^{i^{v'} } \dto^{\theta_{n,A}}    & 
	C_{n}(B,R)\rto^{j^{v'}} \dto^{\theta_{n,B}} & C_{n}(C,R)\dto^{\theta_{n,C}}\rto  & 0 \\
	0 \rto & C_{n}(A,R) \rto^{i^v}                           & C_{n}(B,R)\rto^{j^v}    & C_{n}(C,R)\rto & 0
	\enddiagram
	$$ 
	Then we have the commutative diagram of long exact homology sequences
	\[
	\diagram 
	\cdots \rto & H_{n}^{v'}(B) \rto \dto^{\bar\theta_{n,B}}  & H^{v'}_{n}(C)  \rto^{\delta_n^{v'}}  \dto^{\bar\theta_{n,C}} & H_{n-1}^{v'}(A)   \rto  \dto^{ \bar\theta_{n-1,A}}  & H_{n-1}^{v'}(B)  \rto \dto^{\bar\theta_{n-1,B}} & \cdots\\
	\cdots  \rto & H^v_{n}(B)  \rto  & H^v_{n}(C)  \rto^{ \delta^v_n} & H^v_{n-1}(A)   \rto &
	H^v_{n-1}(B) \arrow[r] & \cdots
	\enddiagram.
	\]
	and in particular $\bar \theta_{n-1,A}\circ \delta^{v'}_n =\delta^v_n\circ\bar\theta_{n,C}$ as well as $\bar\theta_{n,C}\circ j^{v'} = j^{v}\circ \bar\theta_{n,B}$.
\end{proposition}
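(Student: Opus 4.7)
The plan is to treat Proposition~\ref{P:homcom} as the standard naturality of the long exact sequence of homology with respect to a morphism of short exact sequences of chain complexes, specialized to the weighted setting. First, I would observe that each row is a short exact sequence of chain complexes with respect to $\partial^{v'}$ and $\partial^v$ respectively, so applying the zig-zag (snake) lemma to each row separately produces the two long exact sequences displayed in the statement. By Lemmas~\ref{L:commute} and~\ref{L:induced}, each $\theta_{n,*}$ is a chain map and hence descends to a homomorphism $\bar\theta_{n,*}$ on weighted homology, giving the vertical arrows of the ladder.

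Next, I would check commutativity of the squares of the ladder one at a time. The squares involving $i^{v'},i^v,j^{v'},j^v$ commute on the chain level by hypothesis; since the induced maps $i_*^v,j_*^v$ and $\bar\theta_{n,*}$ are defined on representatives, the induced squares in homology commute automatically. This in particular yields $\bar\theta_{n,C}\circ j^{v'}=j^v\circ\bar\theta_{n,B}$.

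The main (and only non-formal) step is the square involving the connecting homomorphism, i.e.~$\bar\theta_{n-1,A}\circ \delta^{v'}_n=\delta^v_n\circ\bar\theta_{n,C}$, which I would establish by a direct diagram chase. Given a cycle $c\in C_n(C,R)$ representing a class $[c]\in H^{v'}_n(C)$, pick a lift $b\in C_n(B,R)$ with $j^{v'}(b)=c$ and let $a\in C_{n-1}(A,R)$ be the unique element with $i^{v'}(a)=\partial^{v'}_n(b)$, so that $\delta^{v'}_n([c])=[a]$. The commutativity $j^v\circ\theta_{n,B}=\theta_{n,C}\circ j^{v'}$ then shows that $\theta_{n,B}(b)$ is a valid lift of $\theta_{n,C}(c)$, and combining Lemma~\ref{L:commute} with $\theta_{n-1,B}\circ i^{v'}=i^v\circ\theta_{n-1,A}$ gives $\partial^v_n(\theta_{n,B}(b))=\theta_{n-1,B}(\partial^{v'}_n(b))=\theta_{n-1,B}(i^{v'}(a))=i^v(\theta_{n-1,A}(a))$. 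Therefore $\delta^v_n(\bar\theta_{n,C}[c])=[\theta_{n-1,A}(a)]=\bar\theta_{n-1,A}(\delta^{v'}_n[c])$, as required.

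No genuine obstacle arises: each verification reduces to an application of a hypothesized commutativity together with Lemma~\ref{L:commute}. The only thing that requires a moment's care is writing the $\delta$-square chase in a self-contained way, since the definition of $\delta$ involves choices of lifts; one must confirm that the $\theta$-images of these chosen lifts remain admissible lifts on the lower row, which is precisely what the given commuting squares ensure.
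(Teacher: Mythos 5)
Your proof is correct and is precisely the standard naturality-of-the-long-exact-sequence argument that the paper invokes when it declares the proposition ``straightforward to verify'' and omits the details. The only step requiring care --- that the $\theta$-images of the chosen lifts remain admissible lifts in the lower row, so the connecting-homomorphism square commutes --- is handled correctly in your diagram chase.
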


Each simplicial complex can be equipped with a constant weight by setting  $v'(\sigma)=1_R$ (the multiplicative identity of $R$) for any $\sigma \in X$. Accordingly, we obtain the chain map $\theta_n :C_{n}(X)\xrightarrow{} C_{n}(X,R)$ given by
$
\theta_n^{v',v} (\sigma) =	v(\sigma) \sigma,
$
and the induced homomorphism   $\bar\theta_{n} :  H_n(X) \xrightarrow{}   H^v_{n}(X) $ between the simplicial homology  and the weighted homology.

\begin{theorem}\label{T:torsion}
	Let $(X,v)$ be a weighted complex with coefficients in $R=\mathbb{Q}[[\pi]]$. 
	Then the following assertions are equivalent:\\
	$(a)$ $\bar\theta_n$ induces the short exact sequence
	$$
	\diagram
	0 \rto & H_n(X) \rto^{\bar\theta_n} &  H^v_{n}(X) 
	\enddiagram
	$$
	$(b)$ $H_n(X)$ has no torsion.
\end{theorem}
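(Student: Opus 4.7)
I would prove the equivalence by handling the two implications separately, the key auxiliary device being a ``formal inverse'' of $\theta$ available after passing to the fraction field $K=\mathbb{Q}((\pi))$ of $R$. Set $\psi_n\colon C_n(X,K)\to C_n(X,K)$ by $\psi_n(\sigma)=\pi^{-\omega(\sigma)}\sigma$; the same short computation as in Lemma~\ref{L:commute} yields $\psi_{n-1}\partial^v_n=\partial_n\psi_n$ and $\psi_n\theta_n=\mathrm{id}_{C_n(X,R)}$, so $\psi_\ast$ is a chain isomorphism from $(C_\ast(X,K),\partial^v)$ to $(C_\ast(X,K),\partial)$ that trivialises $\theta$ once the coefficients are enlarged from $R$ to $K$.

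For the direction $(b)\Rightarrow(a)$, I would take $[\alpha]\in\ker\bar\theta_n$, write $\theta_n(\alpha)=\partial^v_{n+1}(\gamma)$ for some $\gamma\in C_{n+1}(X,R)$, and apply $\psi$ to obtain $\alpha=\partial_{n+1}\psi_{n+1}(\gamma)$ in $C_n(X,K)$; hence $[\alpha]$ vanishes in $H_n(X,K)$. Under the torsion-freeness hypothesis the natural map $H_n(X)\to H_n(X)\otimes_{\mathbb{Z}}\mathbb{Q}=H_n(X,\mathbb{Q})$ is injective, and $H_n(X,\mathbb{Q})\hookrightarrow H_n(X,\mathbb{Q})\otimes_{\mathbb{Q}}K=H_n(X,K)$ is also injective because $K$ is a nonzero $\mathbb{Q}$-vector space; composing forces $[\alpha]=0$ in $H_n(X)$, so $\bar\theta_n$ is injective. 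For $(a)\Rightarrow(b)$ I would argue the contrapositive: a nonzero torsion class $[\alpha]$ of order $k>1$ satisfies $k\alpha=\partial\beta$ for some integer chain $\beta\in C_{n+1}(X,\mathbb{Z})$, and Lemma~\ref{L:commute} gives $k\theta_n(\alpha)=\theta_n(\partial\beta)=\partial^v_{n+1}\theta_{n+1}(\beta)$, i.e.\ $k\cdot\bar\theta_n([\alpha])=0$ in $H_n^v(X)$; since $k$ is a unit in $R=\mathbb{Q}[[\pi]]$, multiplication by $k$ is injective on $H_n^v(X)$, so $\bar\theta_n([\alpha])=0$ while $[\alpha]\neq 0$, exhibiting a nontrivial kernel.

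The proof is not deep, and the main obstacle is simply to be explicit about the two places where the specific form of $R=\mathbb{Q}[[\pi]]$ is essential: positive integers must be units in $R$ (used in $(a)\Rightarrow(b)$), and the fraction field $K$ must be a $\mathbb{Q}$-extension, so that torsion-freeness over $\mathbb{Z}$ survives to an injection $H_n(X)\hookrightarrow H_n(X,K)$ (used in $(b)\Rightarrow(a)$). Both conditions fail over, for example, $\mathbb{F}_p[[\pi]]$, so the dependence on the coefficient ring should be flagged in the write-up; beyond that, the remaining work is routine verification of the identities satisfied by $\psi$.
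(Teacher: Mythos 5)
Your proposal is correct, and the direction $(b)\Rightarrow(a)$ takes a genuinely different route from the paper. The paper proves this implication by an explicit power-series computation: writing $v(\tau_j)=\pi^{m_j}$ and expanding the coefficients $b_j$ of a bounding chain, it extracts the $\pi^{m_j}$-coefficient to show that the chain bounding $\theta_n(\sum a_i\sigma_i)$ may be replaced by one lying in $\mathrm{Im}\,\theta_{n+1}$ up to rational denominators, then clears denominators and uses injectivity of $\theta_n$ on chains together with torsion-freeness. You instead invert $\theta$ wholesale by passing to the fraction field $K=\mathbb{Q}((\pi))$ via $\psi_n(\sigma)=\pi^{-\omega(\sigma)}\sigma$, concluding that any class in $\ker\bar\theta_n$ already dies in $H_n(X,K)$, and then pulling this back through the injections $H_n(X)\hookrightarrow H_n(X,\mathbb{Q})\hookrightarrow H_n(X,K)$ guaranteed by torsion-freeness and flatness. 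Your argument is shorter and makes transparent exactly where the hypothesis on $R$ enters; the paper's coefficient-extraction argument is more hands-on but is essentially the template reused in the Claim inside Theorem~\ref{T:functorial0}(b) for $R=\mathbb{F}[[\pi]]$, where your field-of-fractions shortcut would not by itself identify the bounding chain as an integral one. Your $(a)\Rightarrow(b)$ direction (torsion of order $k$ maps to zero because $k$ is a unit in $R$) is the same as the paper's, just phrased via the contrapositive. One cosmetic point: $\psi_n\circ\theta_n$ is the inclusion $C_n(X,\mathbb{Z})\hookrightarrow C_n(X,K)$ rather than $\mathrm{id}_{C_n(X,R)}$ as written, since $\theta_n$ here has domain the integral chains; this does not affect the argument.
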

\begin{proof}
	$(a) \Rightarrow (b)$: we show that if $H_n(X)$ has torsion, then $\bar\theta_n$ is not injective. 
	Suppose there exists some nontrivial $\sum_ia_i\sigma_i+\text{\rm Im }\partial_{n+1}$ such that 
	$q(\sum_ia_i\sigma_i+\text{\rm Im }\partial_{n+1})=0$. Then
	$q(\sum_ia_i\sigma_i)  =  \partial_{n+1}(\sum_j z_j \tau_j)$ is equivalent to
	$
	\sum_i v(\sigma_i) a_i\sigma_i  = \partial_{n+1}^v(\sum_j \frac{z_j}{q} v(\tau_j) \tau_j).
	$
	Consequently, 
	$$
	\bar\theta_n(\sum_i a_i\sigma_i +\text{\rm Im }\partial_{n+1})=
	\theta_n(\sum_i a_i\sigma_i) + \text{\rm Im }\partial^v_{n+1}=0.
	$$
	
	$(b) \Rightarrow (a)$: let $\sum_i a_i \sigma_i + \text{Im }\partial_{n+1} \in H_n(X)$, where $a_i\in\mathbb{Z}$ and $\sigma_i\in C_n(X)$. 
	
	{\it Claim.} Suppose $\theta_n(\sum_{i\in I} a_i \sigma_i) =\partial^v_{n+1}(z)$, then we have for $q_j\in \mathbb{Q}$:
	$$
	\theta_n(\sum_{i\in I} a_i \sigma_i) =\partial^v_{n+1}(\theta_{n+1}(\sum_{j\in J} q_j\tau_j)).
	$$
	
	To prove the Claim, let $z=\sum_{j\in J}b_j\tau_j$, where $z\in C_{n+1}(X,R)$. 
	We compute
	$$
	\partial_{n+1}^v(z)  =   \sum_{j,k} b_j  (-1)^k \frac{v(\hat\tau_{j,k})}{v(\tau_j)} \hat\tau_{j,k}  =  \sum_i \left[\sum_{\sigma_i\subset \tau_j} c_{i,j} b_j \frac{v(\sigma_i)}{v(\tau_j)}\right] \sigma_i.
	$$
	Then
	\begin{equation}\label{E:x1}
	\sum_h a_h v(\sigma_h) \sigma_h= 
	\sum_h \left[\sum_{\sigma_h\subset \tau_j} c_{h,j} b_j \frac{v(\sigma_h)}{v(\tau_j)}\right] 
	\sigma_h,
	\end{equation}
	where $\{\sigma_h\}$ is the set of faces of the set of simplices $\{\tau_j\}$ and $a_h=0$ for $h\not\in I$.
	We write $v(\tau_j)=\pi^{m_j}$ and $b_j=\sum_n x_{j,n}\pi^n$, where $x_{j,n}\in\mathbb{Q}$ and 
	reformulate eq.~(\ref{E:x1}) via power series
	\begin{eqnarray}\label{E:power}
	\sum_h a_h \sigma_h & = & 
	\sum_h \left[\sum_{\sigma_h\subset \tau_j} \sum_n c_{h,j}x_{j,n}\pi^{n-m_j}\right] \sigma_h .
	\end{eqnarray}
	Eq.~(\ref{E:power}) implies that $r_j =x_{j,m_j}\pi^{m_j}$ has the property
	$$
	\theta_n(\sum_i a_i\sigma_i)=\partial_{n+1}^v(\sum_jb_j\tau_j) = \partial_{n+1}^v(\sum_jr_j\tau_j)
	$$ 
	By construction, any $r_j \equiv 0 \ \text{\rm mod } \pi^{m_j}$ which implies
	$$
	\partial_{n+1}^v(\sum_jr_j\tau_j)= \partial_{n+1}^v(\theta_{n+1}(\sum_{j} x_{j,m_j} \tau_j))
	$$
	and setting $q_j=x_{j,m_j}$ the Claim follows.

	Consequently $\zeta=\sum_{j} q_j\tau_j$ has the property 
	$
	\theta_n(\sum_ia_i\sigma_i)=(\partial_{n+1}^v\circ \theta_{n+1})(\zeta).
	$
	Let $q$ denote the smallest common multiple of the denominators of the $q_j$. Then 
	$q \cdot \zeta$ has integer coefficients and we have
	$$
	(\partial_{n+1}^v\circ \theta_{n+1})(q\cdot \zeta)=\theta_n(q\cdot \sum_ia_i\sigma_i).
	$$
	In view of $\partial_{n+1}^v\circ \theta_{n+1}= \theta_{n}\circ\partial_{n+1}$, we derive
	$$
	\theta_n(q\cdot \sum_ia_i\sigma_i)=
	\partial_{n+1}^v\circ \theta_{n+1}(q\cdot \zeta)=\theta_{n}\circ\partial_{n+1}(q\cdot \zeta).
	$$
	Since $\theta_n\colon C_n(X)\rightarrow C_{n}(X,R)$ is injective on $n$-chains, this implies 
	$
	q\cdot \sum_ia_i\sigma_i=\partial_{n+1}(q\cdot \zeta)
	$,
	i.e.~$q\cdot \sum_ia_i\sigma_i$ is a boundary in $H_n(X)$. 
	
	By construction,
	$
	q\cdot  (\sum_ia_i\sigma_i + \text{\rm Im }\partial_{n+1})= q\cdot \sum_ia_i\sigma_i +\text{\rm Im }\partial_{n+1}
	= 0+ \text{\rm Im }\partial_{n+1}
	$,
	whence $q\cdot  (\sum_ia_i\sigma_i + \text{\rm Im }\partial_{n+1})=0$. Since $H_n(X)$ has no torsion 
	this implies $\sum_ia_i\sigma_i + \text{\rm Im }\partial_{n+1}=0$, i.e.~$\sum_ia_i\sigma_i$ is a boundary and thus trivial in $H_n(X)$ and the proof of the theorem is complete.
\end{proof}

Clearly, $\theta^{v',v}_n(C_n(X,R)) \subset C_{n}(X,R)$ and denoting the quotient module by 
$C_{n}(X/\theta^{v',v})=C_{n}(X,R)/\theta^{v',v}_n(C_n(X,R))$ we have the following 
commutative diagram
\begin{equation}\label{E:jj}
\diagram 0 \rto & C_n(X,R) \rto^{\theta^{v',v}_n}  \dto^{\partial_n}    & C_{n}(X,R)\rto^j \dto^{\partial_n^v} & 
C_{n}(X/\theta^{v',v})\dto^{\partial_{n}^v}\rto  & 0 \\
0 \rto & C_{n-1}(X,R) \rto^{\theta^{v',v}_{n-1}} & C_{n-1}(X,R)\rto^{j}      & C_{n-1}(X/\theta^{v',v})
\rto & 0
\enddiagram
\end{equation} 
We  shall write $\theta_n$ instead of $\theta_n^{v',v}$, 
and $C_{n}(X/\theta)$ instead of $C_{n}(X/\theta^{v',v})$.

Let $H_{n}^{v}(X/\theta)$ denote the homology  with respect to the chain complex $\{C_{n}(X/\theta), \partial_n^{v}\}_n$.

\begin{theorem}\label{T:functorial0}
	(a) Let	$(X,v')$ and $(X,v)$ be weighted complexes with coefficients in an integral domain $R$. 
Then we have the long exact homology sequence
	$$
	\diagram
	\rto & H^v_{n+1}(X/\theta) \rto^{\quad \delta_{n+1}^v}  & H^{v'}_{n}(X) 
	\rto^{\theta_n} & H^v_{n}(X) \rto^j& H^v_{n}(X/\theta) \rto^{\delta_n^v} & 
	H^{v'}_{n-1}(X) \rto &
	\enddiagram
	$$
	(b) Suppose $R=\mathbb{F}[[\pi]]$, where $\mathbb{F}$ is a field
	and $v'(\sigma)=1_R$ for any $\sigma$, i.e.,~$H^{v'}_{n}(X)\cong H_n(X,R)$.
	Then the long sequence splits into the exact sequences
	$$
	\diagram
	0 \rto  & H^{}_{n}(X,R) \rto^{\theta_n} & H^{v}_{n}(X) \rto^j& H^{v}_{n}(X/\theta)
	\rto & 0.
	\enddiagram
	$$
\end{theorem}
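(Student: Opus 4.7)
For part (a), I would recognize diagram~(\ref{E:jj}) as a short exact sequence of chain complexes and invoke the snake lemma. The three exactness conditions at the chain level are straightforward: $\theta^{v',v}_n$ is injective because $R$ is an integral domain and multiplication by the nonzero element $v(\sigma)/v'(\sigma)$ has trivial kernel; $j$ is the canonical projection onto $C_n(X,R)/\theta^{v',v}_n(C_n(X,R))$, hence surjective with kernel $\Ima\theta^{v',v}_n$; the two squares commute by Lemma~\ref{L:commute} (left square, with the left column read as $\partial^{v'}_n$) and by the fact that $\partial^v_n$ descends to the quotient (right square). The standard long exact homology sequence of a short exact sequence of chain complexes then delivers (a).

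For part (b), specializing (a) to $v'\equiv 1_R$ gives $H^{v'}_n(X)\cong H_n(X,R)$, and the desired splitting of the long sequence into short exact sequences is equivalent to $\bar\theta_n\colon H_n(X,R)\to H^v_n(X)$ being injective for every $n$ (which forces all connecting maps $\delta^v_{n+1}$ to vanish). I would prove this injectivity by adapting the ``Claim'' from the proof of Theorem~\ref{T:torsion}. Let $z=\sum_i a_i\sigma_i$ be a standard cycle with $\theta_n(z)=\partial^v_{n+1}(w)$ for some $w=\sum_j b_j\tau_j$. Writing $v(\tau_j)=\pi^{m_j}$ and $b_j=\sum_{n\geq 0}x_{j,n}\pi^n$ with $x_{j,n}\in\mathbb{F}$, split $b_j=b_j^-+b_j^+$, where $b_j^-=\sum_{0\leq n<m_j}x_{j,n}\pi^n$ and $b_j^+=\pi^{m_j}q_j$ with $q_j\in R$. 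Equating the coefficient of each face $\sigma_h$ on both sides and cancelling the common factor $\pi^{\omega(\sigma_h)}$ in the fraction field $\mathbb{F}((\pi))$ yields
$$
\sum_{\sigma_h\subset\tau_j} c_{h,j}\,b_j^-\,\pi^{-m_j} \;+\; \sum_{\sigma_h\subset\tau_j} c_{h,j}\,b_j^+\,\pi^{-m_j} \;=\; a_h,
$$
where the first sum contains only strictly negative powers of $\pi$ and the second only non-negative powers. Since $a_h\in R$, the negative-power portion must vanish, which (multiplied back by $\pi^{\omega(\sigma_h)}$) says precisely that $\partial^v_{n+1}(\sum_j b_j^-\tau_j)=0$. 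Setting $\zeta=\sum_j q_j\tau_j$, we then obtain $\theta_n(z)=\partial^v_{n+1}(\theta_{n+1}\zeta)=\theta_n(\partial_{n+1}\zeta)$ by Lemma~\ref{L:commute}; injectivity of $\theta_n$ on chains forces $z=\partial_{n+1}\zeta$, i.e.\ $[z]=0$ in $H_n(X,R)$.

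The hard part is the $\pi$-adic bookkeeping in the decomposition $b_j=b_j^-+b_j^+$: one has to show that the ``low-order'' part contributes zero to $\partial^v_{n+1}(w)$ simultaneously for every face $\sigma_h$, which is exactly where the formal power-series structure of $R=\mathbb{F}[[\pi]]$ (and its fraction field $\mathbb{F}((\pi))$) is used. Compared with Theorem~\ref{T:torsion}, the key simplification is that no denominators need to be cleared, since the $q_j$ already live in $R$; consequently no integral-torsion hypothesis on $H_n(X,\mathbb{Z})$ is required here.
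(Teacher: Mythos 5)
Your proposal is correct and follows essentially the same route as the paper: part (a) is the standard long exact sequence of the short exact sequence of chain complexes in eq.~(\ref{E:jj}) (the paper merely writes out the zig-zag construction of $\delta_n^v$ that you delegate to the snake lemma), and part (b) is proved exactly as in the paper's Claim, by splitting each $b_j$ into its part below and above $\pi^{m_j}$ and observing that the low-order part contributes nothing to $\partial^v_{n+1}(w)$, so that $\bar\theta_n$ is injective and all connecting maps vanish. Your explicit justification that the strictly-negative-power portion must cancel (since $a_h\in R$) is a slightly more careful rendering of the step the paper asserts when passing from eq.~(\ref{E:power2}) to the definition of $r_j$.
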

\begin{proof}
	We consider the commutative diagram of eq.~(\ref{E:jj}). To define the boundary map $\delta_n^v \colon H^v_{n}(X/\theta) \rightarrow H^{v'}_{n-1}(X)$, let 
	$c\in C_{n}(X/\theta)$ be a cycle. Then $c = j(b)$ for some $b \in C_{n}(X,R)$. Since $j(\partial_n^v(b)) = \partial_n^v(j(b)) = \partial_n^v(c) = 0$, we have $\partial^v_n(b)\in \text{\rm Ker}(j)$.
	Thus $\partial_n^v(b) = \theta_{n-1}(a)$ for some $a \in C_{n-1}(X)$, since 
	$\text{\rm Ker}( j) = \text{\rm Im}(\theta_{n-1})$. Furthermore $\partial^{v'}_{n-1}(a) = 0$ since 
	$$
	\theta_{n-2}(\partial_{n-1}(a)) =\partial^v_{n-1}(\theta_{n-1}(a)) = \partial^v_{n-1}(\partial_n^v(b)) = 0
	$$ 
	and 
	$\theta_{n-2}$ is injective. We define $\delta_v^n \colon H^v_{n}(X/\theta) \rightarrow 
	H^{v'}_{n-1}(X)$
	by sending the homology class of $c$ to the homology class of $a$, $\delta^v_n[c] = [a]$. 
	This is well-defined: the element $a$ is uniquely determined by $\partial_n^v(b)$, since 
	$\theta_n$ is injective. A different choice $b'$ for $b$ produces $j(b') = j(b)$, whence 
	$b' - b\in\text{\rm Ker}( j) = \text{\rm Im}(\theta^{v',v}_n)$. Thus $b'- b = \theta_n(a')$ 
	for some $a'$ and $b' = b + \theta_n(a')$. Replacing $b$ by $b+\theta_n(a')$ means 
	to change $a$ to the homologous element $a+\partial^{v'}_{n}(a')$:
	$$
	\theta_{n-1}(a+\partial^{v'}_n(a')) = \theta_{n-1}(a) + 
	\theta_{n-1}(\partial^{v'}_n(a')) = \partial_n^vb + \partial_n^v(\theta_{n}(a') )= \partial_n^v(b + \theta_n(a')).
	$$
	A different choice of $c$ within its homology class, i.e.~$c + \partial^v_{n+1}(c')$ has the following effect: 
	since $c' = j(b')$ for some $b'$, we then have 
	$$
	c + \partial^v_{n+1}(c') = c + \partial^v_{n+1}(j(b')) = c + j(\partial_{n+1}^v(b')) = j(b + \partial_{n+1}^v(b')), 
	$$
	whence $b$ is replaced by $b + \partial_{n+1}^v(b')$, which leaves $\partial_n^v(b)$ and 
	$\theta_{n-1}(a)$ and therefore also $a$ unchanged.
	
	As for exactness, we observe first $\text{\rm Im}(\delta_{n}^v) \subset  \text{\rm Ker}(\theta_{n-1})$.
	By construction, $\theta_{n-1}(a)=\partial_n^v(b)$ and $\partial_n^v(b)$ is by definition trivial in 
	$H^{v'}_{n-1}(X)$. Secondly, $\text{\rm Ker}(\theta_{n-1}) \subset \text{\rm Im}(\delta_{n}^v)$ holds. 
	Given a cycle $a \in H^{v'}_{n-1}(X)$  such that $\theta_{n-1}(a) = \partial_{n}^v(b)$ for some $b \in 
	H^v_{n}(X)$.
	Consider $j(b)$, we immediately observe that $j(b)$ is a cycle, since $\partial_n^v(j(b)) = j(\partial_n^v(b)) = j(\theta_{n-1}((a)) = 0$, and by construction $\delta_n^v([j(b)])=[a]$, whence 
	$\text{\rm Ker}(\theta_{n-1}) \subset \text{\rm Im}(\delta_{n}^v)$ follows.
	
	Therefore we obtain the long exact sequence
	$$
	\diagram
	\rto & H^v_{n+1}(X/\theta) \rto^{\quad \delta_{n+1}^v}  & H^{v'}_{n}(X) 
	\rto^{\theta_n} & H^v_{n}(X) \rto^j& H^v_{n}(X/\theta) \rto^{\delta_n^v} & H_{n-1}^{v'}(X) \rto &
	\enddiagram
	$$
	
	{\it Claim.} We have the short exact sequence $\diagram 0\rto & H_n(X,R) \rto^{\theta_n}& H^v_{n}(X)
	\enddiagram$.
	
	We first observe that, if $\theta_n(\sum_{i\in I} a_i \sigma_i) =\partial^v_{n+1}(z)$, then
	$$
	\theta_n(\sum_{i\in I} a_i \sigma_i) =\partial^v_{n+1}(\theta_{n+1}(\sum_{j\in J} r_j\tau_j)),
	$$
	where $r_j\in R$. Let $z=\sum_{j\in J}b_j\tau_j$, $z\in C_{n+1}(X,R)$. Then
	\begin{equation}\label{E:y1}
	\sum_h a_h v(\sigma_h) \sigma_h= 
	\sum_h \left[   \sum_{\sigma_h\subset \tau_j} c_{h,j} b_j \frac{v(\sigma_h)}{v(\tau_j)}   \right] 
	\sigma_h, 
	\end{equation}
	where $\{\sigma_h\}$ is the set of faces of the set of simplices $\{\tau_j\}$ and $a_h=0$ for $h\not\in I$.
	We write $v(\tau_j)=\pi^{m_j}$, $a_h=\sum_{n} y_{h,n}\pi^n$ 
	and $b_j=\sum_n x_{j,n}\pi^n$, where $x_{j,n}\in\mathbb{F}$.
	 Rewriting eq.~(\ref{E:y1}) via power series we obtain
	\begin{eqnarray}\label{E:power2}
	\sum_h \left[\sum_{n} y_{h,n}\pi^n\right] \sigma_h & = & 
	\sum_h \left[\sum_{\sigma_h\subset \tau_j} \sum_n c_{h,j}x_{j,n}\pi^{n-m_j}\right] \sigma_h 
	\end{eqnarray}
	and eq.~(\ref{E:power2}) implies that $r_j =\sum_{n\ge m_j} x_{j,n}\pi^{m_j}$ has the property
	$$
	\theta_n(\sum_i a_i\sigma_i)=\partial_{n+1}^v(\sum_jb_j\tau_j) = \partial_{n+1}^v(\sum_jr_j\tau_j). 
	$$ 
	Furthermore by construction, for any $r_j$ holds $r_j \equiv 0 \ \text{\rm mod } \pi^{m_j}$ 
	i.e.,~$r_j=r'_j\pi^{m_j}$, whence
	$$
	\partial_{n+1}^v(\sum_jr_j\tau_j)= \partial_{n+1}^v(\theta_{n+1}(\sum_{j} r_j' \tau_j)).
	$$
	As a result we obtain the equality of $n$-chains with coefficients in $R$:
	$$
	\theta_n(\sum_i a_i\sigma_i)=\partial_{n+1}^v(\sum_jb_j\tau_j) =\partial_{n+1}^v(\theta_{n+1}(\sum_{j} r_j' \tau_j))=
	\theta_n\circ \partial_{n+1}(\sum_{j} r_j' \tau_j).
	$$
	Consequently we derive $\sum_i a_i\sigma_i=\partial_{n+1}(\sum_{j} r_j' \tau_j)$,
	 i.e.,~$\sum_i a_i\sigma_i$ is a boundary in $H_n(X,R)$.
	
	As a result the connecting homomorphisms, $\delta_{n+1}^v$, 
	are trivial, whence the long exact sequence splits into the exact sequences
	$$
	\diagram
	0 \rto  & H_n(X,R) \rto^{\theta_n} & H^v_{n}(X) \rto^j& H^v_{n}(X/\theta) \rto & 0.
	\enddiagram
	$$
\end{proof}
\begin{corollary}\label{C:x}
	We have the exact sequence
	$$
	\diagram
	0 \rto  & H_n(X^n,R) \rto^{\theta_n} & H^v_{n}(X^n) \rto^j& H^v_{n}(X^n/\theta) \rto & 0,
	\enddiagram
	$$
	where $X^n$ denotes the $n$-skeleton of $X$.
\end{corollary}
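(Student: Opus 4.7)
The plan is to derive this corollary as a direct specialization of Theorem~\ref{T:functorial0}(b) applied to the $n$-skeleton $X^n$ rather than to $X$ itself. First I would observe that $X^n$, being the subcomplex of $X$ consisting of all simplices of dimension at most $n$, is itself a simplicial complex, and the restriction of $v$ to $X^n$ still satisfies the monotonicity condition required of a weight, so $(X^n, v)$ is a weighted simplicial complex over $R = \mathbb{F}[[\pi]]$.

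Next I would introduce the companion constant weight $v'(\sigma) = 1_R$ for every $\sigma \in X^n$, so that $H^{v'}_{n}(X^n) \cong H_n(X^n, R)$ via the obvious identification of chain complexes. With both $(X^n, v')$ and $(X^n, v)$ in hand, the chain map $\theta_n^{v',v}\colon C_n(X^n, R) \to C_n(X^n, R)$ of Lemma~\ref{L:commute} is defined, inducing $\bar\theta_n\colon H_n(X^n, R) \to H^v_n(X^n)$ by Lemma~\ref{L:induced}. The short exact sequence of chain complexes in~(\ref{E:jj}), now read on $X^n$, gives rise to the long exact sequence of part~(a) of Theorem~\ref{T:functorial0}.

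Finally I would invoke part~(b) of that theorem verbatim: since $R = \mathbb{F}[[\pi]]$ and the companion weight is constant, the connecting homomorphisms $\delta_{n+1}^v$ in the long exact sequence vanish, so the sequence splits into the short exact sequences
$$
0 \longrightarrow H_n(X^n, R) \xrightarrow{\theta_n} H^v_n(X^n) \xrightarrow{j} H^v_n(X^n/\theta) \longrightarrow 0,
$$
which is exactly the claimed sequence.

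Since this is purely an instantiation of Theorem~\ref{T:functorial0}(b), there is no real obstacle. The only point deserving a brief check is that nothing in the proof of Theorem~\ref{T:functorial0}(b) uses properties of $X$ beyond being a simplicial complex equipped with the two weights; restriction to the $n$-skeleton only truncates the chain complex above degree $n$, which does not affect the argument in degrees $\le n$. Hence the statement follows immediately.
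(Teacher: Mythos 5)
Your proposal is correct and is exactly the intended argument: the paper offers no separate proof of Corollary~\ref{C:x}, treating it as the immediate specialization of Theorem~\ref{T:functorial0}(b) to the weighted complex $(X^n, v|_{X^n})$ with constant companion weight $v'=1_R$. Your added check that restriction to the skeleton preserves the weight condition and does not disturb the argument in degrees $\le n$ is a sensible, if routine, verification.
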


\section{Some combinatorics}\label{S:com}

\begin{lemma}\label{L:Q}
	Let $(X,v)$ be a weighted complex with coefficients in $R=\mathbb{F}[[\pi]]$. 
	Then we have the short exact sequence of $R$-modules
	\begin{equation}\label{E:unit1}
	\diagram
	0\rto & \pi H_{n}(X,R) \rto & H_{n}(X,R) \rto^{\bar\rho}  & H_{n}(X,\mathbb{F})\rto  & 0 ,
	\enddiagram
	\end{equation}
	where the homomorphism $\bar\rho$ is induced by $\rho$, which maps a formal power series $r\in R$ to its constant term $\bar r$.
\end{lemma}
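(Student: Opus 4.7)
The plan is to derive the claimed sequence from the multiplication-by-$\pi$ short exact sequence of weighted chain complexes
$$
0 \longrightarrow C_\bullet(X,R) \xrightarrow{\,\cdot\pi\,} C_\bullet(X,R) \xrightarrow{\,\rho_\bullet\,} C_\bullet(X,\mathbb{F}) \longrightarrow 0,
$$
where $\rho_\bullet$ reduces coefficients modulo $\pi$. First I would check that both arrows are chain maps with respect to the weighted boundary $\partial_n^v$: multiplication by $\pi$ is $R$-linear, and $\rho$ is a ring homomorphism that sends each coefficient $\pi^{\omega(\hat\sigma_i)-\omega(\sigma)}$ of $\partial_n^v$ to the corresponding coefficient over $\mathbb{F}$, producing the weighted boundary on the quotient. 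Short exactness at each $C_n$ follows from $\mathbb{F}=R/(\pi)$ and the fact that $\pi$ is a non-zero-divisor on the free $R$-module $C_n(X,R)$.

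Second, I would pass to the associated long exact homology sequence
$$
\cdots \to H_n(X,R) \xrightarrow{\,\cdot\pi\,} H_n(X,R) \xrightarrow{\,\bar\rho\,} H_n(X,\mathbb{F}) \xrightarrow{\,\delta\,} H_{n-1}(X,R) \xrightarrow{\,\cdot\pi\,} H_{n-1}(X,R) \to \cdots
$$
Exactness at the middle $H_n(X,R)$ identifies $\ker\bar\rho$ with $\mathrm{Im}(\cdot\pi) = \pi H_n(X,R)$, which is the desired exactness at $H_n(X,R)$ in the claimed sequence. I would also give the direct verification at the chain level: given a cycle $z$ with $\rho(z) = \partial_n^v \bar c$, lift $\bar c$ to some $c \in C_n(X,R)$; then $z - \partial_n^v c$ lies in $\pi C_n(X,R)$, write it as $\pi z'$, and observe that $z'$ is automatically a cycle (since $\pi\,\partial_{n-1}^v z' = \partial_{n-1}^v\partial_n^v(c+z'\pi/\pi)=0$ and $\pi$ is a non-zero-divisor), yielding $[z] = \pi[z'] \in \pi H_n(X,R)$.

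Third, surjectivity of $\bar\rho$ amounts to $\delta = 0$, equivalently to multiplication by $\pi$ being injective on $H_{n-1}(X,R)$. I would establish it by a constant-coefficient lift-and-correct argument: starting from $\bar z = \sum \bar a_i\sigma_i \in Z_n(X,\mathbb{F})$, the lift $z = \sum \bar a_i\sigma_i \in C_n(X,R)$ satisfies $\rho_{n-1}(\partial_n^v z) = \partial_n^v\bar z = 0$, hence $\partial_n^v z = \pi w$ for some $w \in C_{n-1}(X,R)$; because $\pi$ is a non-zero-divisor and $\pi\,\partial_{n-1}^v w = \partial_{n-1}^v\partial_n^v z = 0$, $w$ is a cycle, and $\pi w = \partial_n^v z$ being a boundary forces $[w]$ to be $\pi$-torsion in $H_{n-1}(X,R)$. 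Writing $w = \partial_n^v u$ produces the corrected cycle $z - \pi u$ lifting $\bar z$. The main obstacle is precisely producing such a $u$: $\bar\rho$ is surjective exactly when $H_{n-1}(X,R)$ is $\pi$-torsion-free, and this hypothesis — either implicit here or supplied from the context in which the lemma is applied (for instance in the skeleton setting of Corollary~\ref{C:x}, where $H_{n-1}(X^{n-1},R)$ sits inside the free module $C_{n-1}(X^{n-1},R)$ and is hence torsion-free) — is the real technical content of the statement.
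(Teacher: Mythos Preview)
Your long-exact-sequence framework is sound, but you have misread the statement: the groups $H_n(X,R)$ and $H_n(X,\mathbb{F})$ carry no $v$-superscript and denote the \emph{ordinary} simplicial homology with coefficients in $R$ and in $\mathbb{F}$, not the weighted homology. The hypothesis ``let $(X,v)$ be a weighted complex'' merely fixes the ambient setting of the section; the lemma itself does not involve $v$. The relevant boundary is the standard $\partial_n$, whose matrix entries lie in $\{-1,0,1\}\subset\mathbb{F}\subset R$, not $\partial_n^v$.

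This misreading is precisely what produces the ``obstacle'' in your third paragraph. With the unweighted boundary, surjectivity of $\bar\rho$ is immediate and needs no external hypothesis: a cycle $\bar z=\sum_i\bar a_i\sigma_i$ over $\mathbb{F}$, regarded verbatim as a chain over $R$ with constant coefficients, already satisfies $\partial_n z=0$ in $C_{n-1}(X,R)$, since the boundary matrix has entries in $\mathbb{F}$; no correction term $\pi w$ ever appears. Equivalently, every $H_m(X,R)$ is a free $R$-module (the Smith normal form over $R=\mathbb{F}[[\pi]]$ of a matrix with entries in $\mathbb{F}$ has only units and zeros on the diagonal), so multiplication by $\pi$ is injective on $H_{n-1}(X,R)$ and $\delta=0$ in your long exact sequence. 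By contrast, under your weighted reading the analogue of the lemma is false in general, since $H_{n-1}^v(X)$ can carry $\pi$-torsion --- which is exactly why you found yourself reaching for an extra hypothesis.

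Once this is corrected, your argument and the paper's agree in substance: the paper verifies $\ker\bar\rho=\pi H_n(X,R)$ by the same lift-and-subtract step you sketch in your second paragraph, and takes surjectivity as evident for the reason above.
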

\begin{proof}
	We first show $\text{\rm Ker}(\bar\rho)\subset \pi H_{n}(X,R)$. Suppose 
	$
	\bar\rho(\sum_j r_j\tau_j+\text{\rm Im }\partial_{n+1})=\sum_j \bar r_j\tau_j + 
	\text{\rm Im }\bar\partial_{n+1}=0
	$.
	Then there exists some $\sum_h \bar a_h\mu_h\in C_{n+1}(X,\mathbb{F})$, producing the equality of $n$-chains
	$
	\sum_j \bar r_j\tau_j -\bar\partial_{n+1}(\sum_h \bar a_h\mu_h)=\sum_j \bar x_j\tau_j=0
	$, where each coefficient, $\bar x_j=0$. Clearly
	$$
	\sum_j r_j\tau_j+\text{\rm Im }\partial_{n+1}=
	\left[\sum_j r_j\tau_j-\partial_{n+1,R}(\sum_h  a_h\mu_h)\right]+\text{\rm Im }\partial_{n+1}
	\in \pi H_{n}(X,R),
	$$
	from which $\text{\rm Ker}(\bar\rho)\subset \pi H_{n}(X,R)$ follows. It remains to observe 
	$\pi H_{n}(X,R)\subset \text{\rm Ker}(\bar\rho)$, which is immediate.
\end{proof}
{\bf Remark.}
While $H_n(X,\mathbb{F})$ is free as an  $\mathbb{F}$-module,
 $H_n(X,\mathbb{F})$ is not a free ${R}$-module.
 In fact, by Lemma~\ref{L:Q}, we can derive that, as  an ${R}$-module, $H_n(X,\mathbb{F})$  is full torsion
   and is composed of 
 $m$ copies of $R/ (\pi)$, where $m=\rnk H_{n}(X,R) $.
 Accordingly, the short exact sequence~(\ref{E:unit1}) is not split exact.
 
\begin{theorem}\label{T:kappa-basis}
	Let $(X,v)$ be a weighted complex with coefficients in $R=\mathbb{F}[[\pi]]$. 
	Then there exists a subset of $n$-simplices, $K\subsetneq \{\sigma\mid \sigma\in C_{n}(X,R)\}$
	and a distinguished $K$-basis of $H^v_{n}(X^n)$,  $\hat{\mathfrak{B}}^v_K=\{\hat\beta_\kappa 
	\mid \kappa \in K\}$, such that the following holds\\
	(i) any $K$-set has the same cardinality and $M=\complement{K}$ is a basis of $\partial_n^v(C_{n}(X,R))$,\\
	(ii) each $\hat\beta_\kappa\in \hat{\mathfrak{B}}^v_K$ contains a unique, distinguished 
	simplex $\kappa\in K$, having coefficient one, 
	$$
	\hat\beta_{\kappa}=\sum r_\ell \mu_\ell + \kappa, \quad\text{\rm where } r_\ell  \text{\rm \  are monomials satisfying }  \deg v(\mu_\ell) =\deg r_\ell v(\kappa), 
	$$
	(iii) let  $\theta_n(\beta_\kappa)=v(\kappa)\hat \beta_\kappa$, 
then	$\mathfrak{B}^v_K=\{\beta_\kappa\mid \kappa\in K\}$ is a basis of $H_n(X^n,R)$,\\
	(vi) let $\gamma_\kappa=\bar\rho(\beta_\kappa)$, then $\{\gamma_\kappa\mid \kappa\in K\}$ 
	is a basis of $H_n(X^n,\mathbb{F})$ and $H_n(X^n,R)$.
\end{theorem}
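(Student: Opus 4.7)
The plan is a weight-ordered greedy column reduction of the standard boundary over the residue field $\mathbb{F}$, lifted through the chain map $\theta_n$ to the weighted setting; the local structure of $R=\mathbb{F}[[\pi]]$ and Nakayama's lemma then upgrade $\mathbb{F}$-bases to $R$-bases. Throughout I will use that the standard boundary matrix $A_n$ has entries in $\mathbb{Z}\subset\mathbb{F}$ and that $A^v=D_{n-1}A_nD_n^{-1}$, where $D_k$ is the diagonal matrix of $k$-simplex weights; in particular $\rnk_R A^v=\rnk_R A_n=\rnk_\mathbb{F} A_n$.

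First, I order the $n$-simplices in descending order of $\omega$ (ties broken arbitrarily) and process them sequentially: $\sigma$ joins $M$ if $\bar\partial_n(\sigma)$ lies outside the $\mathbb{F}$-span of $\{\bar\partial_n(\mu):\mu\in M \text{ already chosen}\}$, and joins $K$ otherwise. This is textbook greedy basis extraction: $\{\bar\partial_n(\mu)\}_{\mu\in M}$ is an $\mathbb{F}$-basis of $\bar\partial_n(C_n(X,\mathbb{F}))$, $|K|=\dim_\mathbb{F} H_n(X^n,\mathbb{F})$ (proving the cardinality invariance in (i)), and for each $\kappa\in K$ there is a unique expansion $\bar\partial_n(\kappa)=\sum_\ell a_\ell\bar\partial_n(\mu_\ell)$ over $M$-simplices $\mu_\ell$ processed \emph{before} $\kappa$, hence with $\omega(\mu_\ell)\geq\omega(\kappa)$. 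Setting $\gamma_\kappa=\kappa-\sum_\ell a_\ell\mu_\ell\in H_n(X^n,\mathbb{F})$, the distinguished coefficient $1$ on $\kappa$ gives $\mathbb{F}$-linear independence, and matching $|K|$ with $\dim H_n(X^n,\mathbb{F})$ delivers the $\mathbb{F}$-basis statement of (iv).

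Next, I lift to $R$ by setting $\beta_\kappa=\kappa-\sum_\ell a_\ell\mu_\ell\in C_n(X,R)$ with the same $\mathbb{F}$-coefficients. A power-series decomposition of $\ker A_n$ over $R$ yields $\ker_R\partial_n=(\ker_\mathbb{F}\bar\partial_n)\otimes_\mathbb{F} R$, so $\{\beta_\kappa\}$ is an $R$-basis of $H_n(X^n,R)$, proving (iii); since $\bar\rho(\beta_\kappa)=\gamma_\kappa$ this also completes (iv). Because $\omega(\mu_\ell)\geq\omega(\kappa)$ whenever $a_\ell\neq 0$, the identity $\theta_n(\beta_\kappa)=v(\kappa)\kappa-\sum_\ell a_\ell v(\mu_\ell)\mu_\ell=v(\kappa)\bigl(\kappa-\sum_\ell a_\ell\pi^{\omega(\mu_\ell)-\omega(\kappa)}\mu_\ell\bigr)$ holds inside $C_n(X,R)$, so setting $\hat\beta_\kappa=\kappa-\sum_\ell a_\ell\pi^{\omega(\mu_\ell)-\omega(\kappa)}\mu_\ell$ yields an element of $C_n(X,R)$ whose monomial coefficients $r_\ell=-a_\ell\pi^{\omega(\mu_\ell)-\omega(\kappa)}$ satisfy $\deg r_\ell+\deg v(\kappa)=\deg v(\mu_\ell)$, exactly the form of (ii). The identity $\partial_n^v\theta_n=\theta_{n-1}\partial_n$ then gives $v(\kappa)\partial_n^v(\hat\beta_\kappa)=\theta_{n-1}(\partial_n\beta_\kappa)=0$, and torsion-freeness of $C_{n-1}(X,R)$ forces $\partial_n^v(\hat\beta_\kappa)=0$.

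Finally, $\{\hat\beta_\kappa\}$ will be shown to be an $R$-basis of $H_n^v(X^n)=\ker\partial_n^v$: the rank calculation gives $\rnk_R\ker\partial_n^v=|K|$, the distinguished coefficient $1$ on $\kappa$ forces $R$-linear independence, and generation follows from Nakayama's lemma. Specifically, $\ker\partial_n^v$ is a pure submodule of the torsion-free $C_n(X,R)$, so $\ker\partial_n^v/\pi\ker\partial_n^v\hookrightarrow C_n(X,\mathbb{F})$; the reductions $\bar\rho(\hat\beta_\kappa)=\kappa-\sum_{\omega(\mu_\ell)=\omega(\kappa)}a_\ell\mu_\ell$ retain the distinguished $\kappa$-coefficient $1$, hence are $\mathbb{F}$-independent in $C_n(X,\mathbb{F})$, so $|K|$ such elements span the $|K|$-dimensional residue quotient and Nakayama lifts this to $R$-generation of $\ker\partial_n^v$. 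For the remaining part of (i), the relations $\partial_n^v(\hat\beta_\kappa)=0$ express each $\partial_n^v(\kappa)$ as an $R$-combination of $\{\partial_n^v(\mu):\mu\in M\}$, so these images span $\partial_n^v(C_n(X,R))$; matching the rank $|M|$ then gives the $R$-basis. The main obstacle is the Nakayama step: $\bar\rho(\hat\beta_\kappa)$ only retains the equal-weight $\mu$-terms and is generally not $\gamma_\kappa$, so one must verify independence of the surviving residues directly; the initial weight-ordering is equally essential, since it is precisely what forces the exponents $\omega(\mu_\ell)-\omega(\kappa)$ to be non-negative and the passage from $\beta_\kappa$ to $\hat\beta_\kappa$ to be well-defined over $R$.
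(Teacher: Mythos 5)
Your proof is correct, but it reaches the $K/M$ partition and the distinguished cycles by a genuinely different route than the paper. The paper runs the greedy elimination directly over $R$ against the \emph{weighted} boundary $\partial_n^v$, with an \emph{arbitrary} ordering of the $n$-simplices: at each step it asks whether eq.~(\ref{Eq:kappa}) has a nontrivial solution in $R$, clears the gcd of the coefficients, and designates as the new $\kappa$ (or swaps in as a new $\mu$) a simplex whose coefficient is a unit. The monomial form in (ii) is then recovered \emph{a posteriori} (Claim~2 of the paper's proof): one expands $\partial_n^v(\hat\beta_\kappa)=0$ face by face in powers of $\pi$, extracts the $\pi^{-\omega(\kappa)}$-terms to manufacture a monomial cycle $\hat\beta_\kappa'$, and concludes $\hat\beta_\kappa'=\hat\beta_\kappa$ by uniqueness. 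You instead run the elimination over the residue field against the \emph{unweighted} boundary, and your one extra idea --- processing simplices in weight-descending order --- forces $\omega(\mu_\ell)\ge\omega(\kappa)$ a priori, so the monomial form, the well-definedness of $\beta_\kappa$ and $\hat\beta_\kappa$ over $R$, and parts (iii)--(iv) come essentially for free from $\ker_R\partial_n=(\ker_{\mathbb{F}}\bar\partial_n)\otimes_{\mathbb{F}}R$ and conjugation by $\theta_n$; the only genuinely weighted work left is the Nakayama step for spanning $\ker\partial_n^v$, where you correctly observe that $\bar\rho(\hat\beta_\kappa)$ is not $\gamma_\kappa$ but still carries the distinguished coefficient $1$ on $\kappa$, which suffices for independence in the $|K|$-dimensional residue quotient. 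Your version cleanly separates the combinatorics over $\mathbb{F}$ from the arithmetic of $R$ and makes $|K|=\dim_{\mathbb{F}}H_n(X^n,\mathbb{F})=\rnk_R H_n^v(X^n)$ transparent; the paper's version needs no weight ordering and produces $\hat\beta_\kappa$ directly as the unique weighted cycle supported on $\{\kappa\}\cup M$, at the cost of the more delicate power-series analysis in its Claim~2.
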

\begin{proof}
	We construct $\hat{\mathfrak{B}}^v_K$ recursively via the following procedure: set 
	$M_0=\varnothing$ and $S_0=\{\sigma\mid \sigma\in C_{n}(X,R)\}$. Label the 
	simplices $\sigma_i$ arbitrary and examine them one by one, producing recursively 
	the sequence $(M_i,S_i)$, where $M_1=M_0\cup \{\mu_1\mid \mu_1=\sigma_1\}$ and 
	$S_1=S_0\setminus \{\sigma_1\}$, i.e.~we remove $\sigma_1$ from $S_0$, relabel as 
	$\mu_1$ and add to $M_0=\varnothing$.
	
	Having constructed $(M_m,S_m)$ we proceed by examining $\sigma_{m+1}$. 
	We set $S_{m+1}=S_m\setminus \{\sigma_{m+1}\}$ and given the equation
\begin{equation}\label{Eq:kappa}
	\partial_n^v(\sum_\ell r_\ell \mu_\ell + r_{m+1}\sigma_{m+1})=0,
\end{equation}
	distinguish two scenarios.
	In case there exists no nontrivial solution of $r_\ell, r_{m+1}\in R$, we set $M_{m+1}=M_m \cup \{\mu_{n+1}=\sigma_{m+1}\}$. 
	Otherwise, clearing the $\gcd$ of $r_\ell$ and $ r_{m+1}$,  we either have $\sigma_{m+1}$ has coefficient one or some 
	$\mu_{\ell}$ does. In the former case we set $M_{m+1}=M_m$ and in the latter 
	$$
	M_{m+1}=(M_m\setminus \{\mu_\ell \})\cup \{\mu_{m+1}=\sigma_{m+1}\},
	\quad S_{m+1}=S_{m}\setminus \{\sigma_{m+1}\}.
	$$
	Accordingly we either add a new $\mu$-simplex or replace a previously added 
	$\mu$-simplex, while step by step examining all $n$-simplices. In this process
	we have $\partial_n^v(M_{m})\subset \partial_n^v(M_{m+1})$, since a $\mu$-simplex
	replaced in $M_m$ is by construction a linear combination of $M_{m+1}$ $\mu$-simplices.
	
	The procedure terminates in case of $S_t=\varnothing$ and all simplices have been examined.
	$M_t$ is by construction a basis of $\partial_n^v(C_{n}(X,R))$ inducing the bipartition into 
	the set of $\mu$-simplices, $M$, and the complimentary set of $\kappa$-simplices, $K$.
	Since any $\partial_n^v(C_{n}(X,R))$-basis has the same size, all $K$-sets have the
	same cardinality.
	
	For each $\kappa$ there exist unique coefficients $r_\ell\in R$, 
	such that $\hat\beta_{\kappa}=\sum r_\ell \mu_\ell + \kappa$ is a $H_{n}^v(X^n)$-cycle
%
	and the $\hat\beta_{\kappa}$-cycles are linearly independent: 
	$0=\sum_\kappa \lambda_\kappa \hat\beta_\kappa$ implies $\lambda_\kappa=0$ for all 
	$\kappa$, since the simplex $\kappa$ appears uniquely in $\hat\beta_\kappa$.

	{\it Claim $1$.} $\hat{\mathfrak{B}}^v_K=\{\hat\beta_{\kappa}\mid \kappa\in K\}$ 
	is a basis of $H^v_{n}(X^n)$.
	
	Let $c=\sum_h a_h\sigma_h$ be a $H^v_{n}(X^n)$-cycle. By construction, $c$ contains
	at least one $\kappa$-simplex. We prove by induction on the number of distinct $\kappa$-simplices 
	contained in $c$ that $c=\sum_\kappa \lambda_\kappa \hat\beta_\kappa$.
	In case of the induction basis $c$ contains exactly one $\kappa$-simplex, $\kappa_0$. Then
	$c$ contains the summand $r_{\kappa_0}\kappa_0$ and exclusively $\mu$-simplices, otherwise.
	Clearly, $c- r_{\kappa_0}\cdot \hat\beta_{\kappa_0}=c'$ is a cycle containing only $\mu$-simplices 
	which is, by construction, trivial, whence $c= r_{\kappa_0}\cdot \hat\beta_{\kappa_0}$.
	For the induction step assume $c$ contains $(m+1)$ $\kappa$ simplices, $\kappa_1,\dots\kappa_{m+1}$.
	Suppose $c$ has the summand $r_{\kappa_{m+1}}$. Then $c-r_{\kappa_{m+1}}\hat\beta_{\kappa_{m+1}}$
	is a cycle that contains exactly $m$ $\kappa$-simplices since $\hat\beta_{\kappa_{m+1}}$ 
	contains, besides $\kappa_{m+1}$, only $\mu$-simplices. By induction hypothesis we then have
	$c-r_{\kappa_{m+1}}\hat\beta_{\kappa_{m+1}}= \sum_{i=1}^m r_{\kappa_i}\hat\beta_i$ and Claim $1$ follows.
	
	{\it Claim $2$.} 
	For each $\hat\beta_{\kappa}=\sum r_\ell \mu_\ell + \kappa$, there exist  monomials
	 $r_\ell$   satisfying $ \deg v(\mu_\ell) =\deg (r_\ell v(\kappa))$ for any $\ell$. 

As a $H_{n}^v(X^n)$-cycle,  $\hat\beta_{\kappa}$ satisfies $ \partial_n^v(\hat\beta_{\kappa})=\partial_n^v(\sum r_\ell \mu_\ell + \kappa)=0$.
For  any $\hat \beta_\kappa$-face $\sigma$,  we derive 
\begin{align*}
\sum_{\sigma\subset \mu_\ell } c_{\ell} 
\frac{r_\ell}{v( \mu_\ell )} + c_{\kappa} 
\frac{1}{v( \kappa)} =0, \text{ for }   \sigma \subset  \kappa, &\qquad
\sum_{\sigma\subset \mu_\ell } c_{\ell} 
\frac{r_\ell}{v( \mu_\ell )} =0, \text{ for }    \sigma \not \subset  \kappa,
\end{align*}
where $c_{\ell} $ and $ c_{\kappa} $ are $\pm 1$.
We write $v(\mu_\ell )=\pi^{\omega(\mu_\ell)}$,  $v(\kappa)=\pi^{\omega(\kappa)}$ and $r_\ell =\sum_{n} x_{\ell,n}\pi^n$, where $x_{\ell,n}\in\mathbb{F}$.
Rewriting the equations  we obtain
\begin{align*}
\sum_{n}  \sum_{\sigma\subset \mu_\ell } c_{\ell} 
x_{\ell,n}\pi^{n-\omega(\mu_\ell)}+ c_{\kappa}  \pi^{-\omega(\kappa)}
&=0 \qquad \text{for }   \sigma \subset  \kappa \\
\sum_{n}  \sum_{\sigma\subset \mu_\ell } c_{\ell} 
x_{\ell,n}\pi^{n-\omega(\mu_\ell)}& =0\qquad \text{for }    \sigma \not \subset  \kappa.
\end{align*}
In particular,  taking $ [\pi^{-\omega(\kappa)}] $-terms, we derive
\begin{align*}
\sum_{\sigma\subset \mu_\ell } c_{\ell}  x_{\ell,\omega(\mu_\ell)-\omega(\kappa)} + c_{\kappa}  &=0 \qquad \text{for }   \sigma \subset  \kappa \\
\sum_{\sigma\subset \mu_\ell } c_{\ell} 
x_{\ell,\omega(\mu_\ell)-\omega(\kappa)} & =0\qquad \text{for }    \sigma \not \subset  \kappa.
\end{align*}
Let  $m_\ell  = x_{\ell,\omega(\mu_\ell)-\omega(\kappa)} \pi^{\omega(\mu_\ell)-\omega(\kappa)}$ 
be the monomials  obtained by taking $ [\pi^{\omega(\mu_\ell)-\omega(\kappa)}] $-terms of  $r_\ell$. 
Then  $\hat\beta_{\kappa}'=\sum m_\ell \mu_\ell + \kappa$ is by construction a  $H_{n}^v(X^n)$-cycle,
and therefore $\hat\beta_{\kappa}'= \hat\beta_{\kappa} $ since $\hat\beta_{\kappa} $ is unique.
Accordingly,  $r_\ell= m_\ell$, i.e.,    
 $r_\ell$ are monomials   satisfying $ \deg v(\mu_\ell) =\deg (r_\ell v(\kappa))$. 


	
		{\it Claim $3$.} 
	$\mathfrak{B}^v_K=\{\beta_\kappa\mid \kappa\in K\}$ is a basis of $H_n(X^n,R)$, and $\{\gamma_\kappa\mid \kappa\in K\}$ 
		is a basis of $H_n(X^n,\mathbb{F})$ and $H_n(X^n,R)$.
		
By definition,	$\beta_\kappa=\theta_n^{-1}(v(\kappa)\hat \beta_\kappa)
=  \sum \frac{r_\ell v(\kappa)}{v( \mu_\ell ) } \mu_\ell +  \kappa $. Since    $r_\ell$ 
satisfy $ \deg v(\mu_\ell) =\deg (r_\ell v(\kappa))$ by Claim 2, $\beta_\kappa$ is well-defined.
Note that  $\sum_i\lambda_i\beta_i=0$ implies 
	$0=\sum_i\lambda_i \theta_n(\beta_i)=\sum_i\lambda_iv(\kappa)\hat\beta_i$ and hence 
	$\lambda_i v(\kappa)=0$ for all $i$, from which $\lambda_i=0$ follows.
	
	To prove $\{\beta_\kappa\mid \kappa \in K\}$ generates $H_{n}(X^n,R)$, we observe
	that $\kappa$ retains coefficient one in $\beta_\kappa$. In view of this we proceed as in Claim $1$ by induction on the number of distinct $\kappa$-edges contained in a $H_n(X^n,R)$-cycle.
	
	Analogously we can show, using Lemma~\ref{L:Q}, that $\{\bar\rho(\beta_\kappa)\mid \kappa\in K\}$ 
	is a basis of $H_n(X^n,\mathbb{F})$, observing that $\kappa$ appears exclusively in 
	$\bar\rho(\beta_\kappa)$ having coefficient one. Lemma~\ref{L:Q} and 
	Nakayama's Lemma\footnote{Let $M$ be a finitely generated module  over a local ring $R$ with maximal ideal $m$. Then 
		every minimal set of generators of $M$ is obtained from the lifting of some
		  basis of $M/m M$.}  imply that 
	$\{\bar\rho(\beta_\kappa)\mid \kappa\in K\}$ is also a basis of $H_n(X,R)$,  whence 
	Claim $3$.
	
	Therefore $\hat{\mathfrak{B}}^v_K=\{\hat\beta_\kappa \mid \kappa\in K\}$ is a basis of $H^v_{n}(X^n)$
	satisfying $(i)$-$(vi)$ and the proof of Theorem~\ref{T:kappa-basis} is complete.
\end{proof}

{\bf Remark.} (a) The $K$-bases of $H^v_{n}(X^n)$,  $\{\hat\beta_\kappa 
\mid \kappa \in K\}$, depend on $\mathbb{F}$, since $\mathbb{F}$ factors into whether or not 
eq.~(\ref{Eq:kappa}) has a nontrivial solution in $R=\mathbb{F}[[\pi]]$.\\
(b) The above proof can be generalized to the case where $R$ is a ring of polynomials over a field, 
i.e.~$R=\mathbb{F}[\pi]$. \\
(c) 
In case $R$ is a discrete valuation ring, whose uniformizer $\pi$ is \emph{algebraic}, we can 
construct the $K$-basis  $\{\hat\beta_\kappa \mid \kappa \in K\}$ as in Theorem~\ref{T:kappa-basis}, however,
in general the basis does not satisfy properties  $(ii)$-$(vi)$.

\begin{corollary}\label{C:xxx}
	Let $\hat{\mathfrak{B}}^v_K$ be a $K$-basis of $H^v_{n}(X^n)$. Then
	\begin{equation}\label{E:what}
	H^v_{n}(X^n/\theta)\cong \bigoplus_{\kappa\in K}R/(v(\kappa)).
	\end{equation}
\end{corollary}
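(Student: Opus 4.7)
The plan is to read off the result from the short exact sequence in Corollary~\ref{C:x} together with the explicit bases constructed in Theorem~\ref{T:kappa-basis}. By Corollary~\ref{C:x} we have the short exact sequence
$$
0 \longrightarrow H_n(X^n,R) \xrightarrow{\theta_n} H^v_n(X^n) \xrightarrow{j} H^v_n(X^n/\theta) \longrightarrow 0,
$$
so identifying $H^v_n(X^n/\theta)$ amounts to describing the cokernel of $\theta_n$ on homology.

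Next I would use Theorem~\ref{T:kappa-basis} to make both the source and target of $\theta_n$ completely explicit. On the source side, statement $(iii)$ tells us that $\mathfrak{B}^v_K = \{\beta_\kappa \mid \kappa \in K\}$ is an $R$-basis of $H_n(X^n,R)$, while on the target side $(i)$--$(ii)$ show that $\hat{\mathfrak{B}}^v_K = \{\hat\beta_\kappa \mid \kappa \in K\}$ is an $R$-basis of $H^v_n(X^n)$. The key input is the relation $\theta_n(\beta_\kappa) = v(\kappa)\,\hat\beta_\kappa$, so under these bases the homomorphism $\theta_n$ is diagonal, with $\kappa$-th entry the scalar $v(\kappa) \in R$.

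Putting these pieces together, the quotient computes directly as
$$
H^v_n(X^n/\theta) \;=\; H^v_n(X^n)\big/\theta_n\bigl(H_n(X^n,R)\bigr) \;\cong\; \Bigl(\bigoplus_{\kappa \in K} R\,\hat\beta_\kappa\Bigr)\Big/\Bigl(\bigoplus_{\kappa \in K} v(\kappa)\,R\,\hat\beta_\kappa\Bigr) \;\cong\; \bigoplus_{\kappa \in K} R/(v(\kappa)),
$$
which is the desired conclusion. There is no real obstacle here: all of the work has been done in Theorem~\ref{T:kappa-basis} to produce bases on which $\theta_n$ acts diagonally, and in Corollary~\ref{C:x} to guarantee that $j$ realizes the cokernel as $H^v_n(X^n/\theta)$. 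The only thing one must verify carefully is that the map $\theta_n$ on chains indeed descends to the diagonal map on these distinguished bases of homology, but this is immediate from the definition $\theta_n(\beta_\kappa) = v(\kappa)\hat\beta_\kappa$ together with the fact that both $\{\beta_\kappa\}$ and $\{\hat\beta_\kappa\}$ are bases (not merely generating sets) so no additional relations intervene in the quotient.
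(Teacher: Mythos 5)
Your proof is correct and follows essentially the same route as the paper: both rest on the short exact sequence of Corollary~\ref{C:x} together with the relation $\theta_n(\beta_\kappa)=v(\kappa)\hat\beta_\kappa$ and the distinguished bases of Theorem~\ref{T:kappa-basis}. If anything, your write-up is more explicit than the paper's (which phrases the argument via cyclic submodules of the chain-level quotient $C_n(X,R)/\theta_n(C_n(X,R))\cong\oplus_\sigma R/(v(\sigma))$ and leaves the directness of the sum implicit), since you identify $H^v_n(X^n/\theta)$ as the cokernel of a map that is diagonal in the given bases.
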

\begin{proof}
	The projection $p\colon C_{n}(X,R)\rightarrow \oplus_\sigma R/v(\sigma)$, given by
	$\sum_i a_i\sigma \mapsto \sum_i (a_i+v(\sigma)) \sigma$ has kernel $\theta_n(C_n(X,R))$ and consequently $C_{n}(X,R)/\theta_n(C_n(X,R))\cong \oplus_\sigma R/v(\sigma)$. 
Since $v(\kappa)\hat\beta_{\kappa}=\theta_n(\beta_{\kappa})$,
	 each $\hat\beta_{\kappa}$ generates a cyclic $H^v_{n}(X^n/\theta)$ submodule isomorphic to 
	$R/(v(\kappa))$, from which the Corollary follows.
\end{proof}

\section{The main theorem}\label{S:main}

\begin{lemma}\label{L:evolution3}
		Let $(X,v)$ be a weighted complex with coefficients in $R=\mathbb{F}[[\pi]]$. 
	Given $v$, we consider the sequence of weight functions $(v_0,v_1,\dots,v_t=v)$ 
	defined by $v_r(\sigma)=v(\sigma)$ for $\text{\rm dim}(\sigma)\le r$ and $v_r(\sigma)=1$, otherwise.
	Then there exist the exact sequences
	\begin{equation}\label{Eq:shortEX1}
	\diagram
	0 \rto  & H_n(X,R) \rto^{\bar\eta_n^n} & H^{v_n}_{n}(X) \rto^j& \oplus_{\kappa}R/(v(\kappa))
	\rto & 0
	\enddiagram
	\end{equation}
	\begin{equation}\label{Eq:shortEX2}
	\diagram
	0 \rto &  \oplus_{\mu}R/(v(\mu))\rto & H^{v_{n-1}}_{n-1}(X) \rto^{\bar\eta_{n-1}^n} & H^{v_n}_{n-1}(X) \rto & 0,
	\enddiagram
	\end{equation}
	where $\bar\eta_n^r $ is induced from 
	$$
	\eta_n^r (\sigma) = \theta_n^{v_{r-1},v_r} (\sigma) = \frac{v_r(\sigma)}{v_{r-1}(\sigma)}=
	\begin{cases}
	v(\sigma) \sigma & \text{\rm if dim}(\sigma)= r \\
	\sigma & \mbox{\rm otherwise.}
	\end{cases}
	$$ 
\end{lemma}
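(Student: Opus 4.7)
The plan is to apply Theorem~\ref{T:functorial0}(a) to the weight pair $v'=v_{n-1}$, $v=v_n$ and then split the resulting long exact sequence at the connecting homomorphism using the $\kappa$-$\mu$ decomposition of $n$-simplices from Theorem~\ref{T:kappa-basis}. The crucial observation is that $v_{n-1}$ and $v_n$ coincide on every simplex whose dimension is different from $n$, so the chain map $\theta=\theta^{v_{n-1},v_n}$ is the identity in every degree $m\ne n$. Therefore $C_m(X/\theta)=0$ for $m\ne n$, while $C_n(X/\theta)=\bigoplus_{\sigma\in X_n}R/(v(\sigma))$, and this concentration in a single degree forces $H^{v_n}_m(X/\theta)=0$ for $m\ne n$ and $H^{v_n}_n(X/\theta)=\bigoplus_{\sigma\in X_n}R/(v(\sigma))$. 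A direct computation also gives $\partial^{v_{n-1}}_{n+1}=\partial_{n+1}$ and $\ker\partial^{v_{n-1}}_n=\ker\partial_n$ (the weighted boundary differs from the standard one only by the nonzero ring element $v(\tau)$ at each $(n-1)$-face, which cancels inside the domain $R$), so $H^{v_{n-1}}_n(X)=H_n(X,R)$. With these identifications the long exact sequence collapses to
\begin{equation*}
0\to H_n(X,R)\xrightarrow{\bar\eta_n^n}H^{v_n}_n(X)\xrightarrow{j}\bigoplus_{\sigma\in X_n}R/(v(\sigma))\xrightarrow{\delta}H^{v_{n-1}}_{n-1}(X)\xrightarrow{\bar\eta_{n-1}^n}H^{v_n}_{n-1}(X)\to 0.
\end{equation*}

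To extract (\ref{Eq:shortEX1}), the plan is to identify $\mathrm{Im}(j)\cong\bigoplus_\kappa R/(v(\kappa))$. Since $\partial^{v_n}_{n+1}(\tau)=\sum_i(-1)^iv(\hat\tau_i)\hat\tau_i$ has every coefficient divisible by the target's weight $v(\hat\tau_i)$, the map $j$ annihilates boundaries and hence factors through the cycle module $\ker\partial^v_n=H^v_n(X^n)$. The $\kappa$-basis $\{\hat\beta_\kappa\mid\kappa\in K\}$ from Theorem~\ref{T:kappa-basis} then supplies explicit generators $\{j(\hat\beta_\kappa)\}$ of $\mathrm{Im}(j)$; each carries its distinguished $\kappa$-component with coefficient one, ensuring $R$-linear independence, and the monomial coefficient property~(ii) of Theorem~\ref{T:kappa-basis} pins the order of $j(\hat\beta_\kappa)$ down to exactly $v(\kappa)$.

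For (\ref{Eq:shortEX2}), the plan is to identify $\mathrm{Im}(\delta)\cong\bigoplus_\mu R/(v(\mu))$. Tracing the construction of $\delta$ in the proof of Theorem~\ref{T:functorial0}, and using that $\theta_{n-1}$ is the identity, yields $\delta([e_\sigma])=[\partial^v_n(\sigma)]$ in $H^{v_{n-1}}_{n-1}(X)$, where $e_\sigma$ denotes the basis class of $\sigma$ in $\bigoplus_\sigma R/(v(\sigma))$. The cycle condition $\partial^v_n(\hat\beta_\kappa)=0$ rewrites $\partial^v_n(\kappa)$ as an $R$-linear combination of $\{\partial^v_n(\mu_\ell)\}$, so $\mathrm{Im}(\delta)$ is generated by $\{\delta([e_\mu])\mid\mu\in M\}$. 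The relation $\partial^{v_{n-1}}_n(\mu)=v(\mu)\partial^v_n(\mu)$ shows that $v(\mu)$ annihilates $\delta([e_\mu])$; conversely, assuming $\pi^s\partial^v_n(\mu)$ is a $\partial^{v_{n-1}}_n$-boundary for some $s<\omega(\mu)$, expanding the putative preimage in the $\partial^v_n(C_n(X,R))$-basis $\{\partial^v_n(\mu')\}_{\mu'\in M}$ provided by Theorem~\ref{T:kappa-basis}(i) and using the $\hat\beta_\kappa$-relations forces $s\ge\omega(\mu)$, a contradiction; the same expansion gives linear independence of the $\delta([e_\mu])$ modulo their orders. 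The main obstacle is this last piece of bookkeeping: threading the monomial coefficient property of Theorem~\ref{T:kappa-basis}(ii) through the connecting homomorphism to confirm that the natural partition $X_n=K\sqcup M$ produces precisely the two claimed short exact sequences.
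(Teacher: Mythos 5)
Your proposal is correct and follows essentially the same route as the paper: apply the long exact sequence of Theorem~\ref{T:functorial0}(a) to the pair $(v_{n-1},v_n)$, observe that the quotient complex is concentrated in degree $n$ so the sequence collapses to a five-term exact sequence with $H^{v_{n-1}}_n(X)\cong H_n(X,R)$, and then split the middle term $\bigoplus_\sigma R/(v(\sigma))$ into its $\kappa$- and $\mu$-parts via Theorem~\ref{T:kappa-basis}. Your identification of $\mathrm{Im}(\delta)$ with $\mathrm{Im}\,\partial_n^{v_n}/\mathrm{Im}\,\partial_n^{v_{n-1}}\cong\bigoplus_\mu R/(v(\mu))$ and of $\mathrm{Im}(j)$ with $\bigoplus_\kappa R/(v(\kappa))$ matches the paper's computation, with your use of the monomial property (ii) making explicit a step the paper leaves terse.
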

\begin{proof}
	
	By construction of $v_n$, the quotient $C_{\ell}(X,R)/\eta_{\ell}^n(C_{\ell}(X,R))$ is only nontrivial for $\ell=n$, in which case $C_{n}(X,R)/\eta^n_n(C_{n}(X,R))\cong  \oplus_\sigma R/(v(\sigma))$, where the summation is over the set of all $n$-simplices.
	Consequently, the boundary maps $\bar\partial^{v_n}_n$ and $\bar\partial^{v_n}_{n+1}$ are trivial, whence
	$$
	H_{\ell}^{v_n}(X/\eta^n_{\ell})\cong 
	\begin{cases}
	\oplus_\sigma R/(v(\sigma)) & \mbox{\rm for } \ell=n \\
	0 & \mbox{\rm for }  \ell\neq n.
	\end{cases}
	$$ 
	The long homology sequence of Theorem~\ref{T:functorial0} then becomes the five term exact sequence
	\begin{equation*}
	\diagram
	0 \rto & H^{v_{n-1}}_{n}(X) \rto^{\bar\eta_n^n} & H^{v_n}_{n}(X) \rto^j& H_{n}^{v_n}(X/\eta^n)\rto^{\delta_n^{v_{n}}}
	& H^{v_{n-1}}_{n-1}(X) \rto^{\bar\eta_{n-1}^n} & H^{v_n}_{n-1}(X) \rto & 0
	\enddiagram
	\end{equation*}
	where $H^{v_{n-1}}_{n}(X)=H_{n}(X,R)$, since all $v_{n-1}$-weights of $n$- and $(n+1)$-simplices 
	are one.
	By exactness at $H_{n}^{v_n}(X/\eta_n^n)\cong\oplus_\sigma R/(v(\sigma))$ and $H^{v_{n-1}}_{n-1}(X)$, we have
	$\text{\rm Im }j =\text{\rm Ker }\delta_n^{v_n}$ and  $\text{\rm Im }\delta_n^{v_n}= 	\text{\rm Ker } \bar\eta_{n-1}^n$.  
	Since  
	$\eta_{n-1}^n \mid_{C_{n-1}(X,R)} = \text{\rm id}$
	and $\text{\rm id} \circ \partial_n^{v_{n-1}}=\partial^{v_n}_n\circ \eta_n^n$,
	we have 
	$$
	\text{\rm Im }\partial_{n}^{v_n}/\text{\rm Im }\partial_{n}^{v_{n-1}}= \oplus_\mu 
	\langle \partial_{n}^{v_n}(\mu)\rangle /\langle v(\mu) \partial_{n}^{v_n}(\mu)\rangle\cong 
	\oplus_\mu R/(v(\mu))
	$$
	and the sequence
	$$
	\diagram
	0 \rto & \bigoplus_\mu R/(v(\mu))\rto^{\bar\partial_{n}^{v_n}\qquad }
	& H^{v_n}_{n-1}(X^n)/\text{\rm Im }\partial_{n}^{v_{n-1}} \rto^{\qquad \text{\rm proj}} & 
	H^{v_n}_{n-1}(X)\rto & 0
	\enddiagram
	$$
	is exact. Since $H^{v_{n-1}}_{n-1}(X^{n-1})= H^{v_n}_{n-1}(X^{n-1})$ we have 
	$H^{v_n}_{n-1}(X^n)/\text{\rm Im }\partial_{n}^{v_{n-1}}\cong H_{n-1}^{v_{n-1}}(X)$ which provides an interpretation of $\text{\rm Ker }\bar\eta_{n-1}^n$, via 
	$$
	\diagram
	0 \rto &  \bigoplus_\mu R/(v(\mu)) \rto^{\bar\partial_{n}^{v_n}} & H^{v_{n-1}}_{n-1}(X) \rto^{\bar\eta_{n-1}^n=\text{\rm proj}} & H^{v_n}_{n-1}(X) \rto & 0.
	\enddiagram
	$$
	$v$ bipartitions the set of $n$-simplices into $\kappa$- and $\mu$-simplices and
	using the exactness at $H_{n}^{v_n}(X/\eta_n^n)\cong \bigoplus_\sigma R/(v(\sigma))$, we obtain
	$$
	\diagram
	0 \rto & H_{n}(X,R) \rto^{\bar\eta_n^n} & H^{v_n}_{n}(X) \rto^j &  \bigoplus_\kappa R/(v(\kappa)) \rto & 0.
	\enddiagram
	$$
\end{proof}

\begin{theorem}\label{T:structure}
		Let $(X,v)$ be a weighted complex with coefficients in $R=\mathbb{F}[[\pi]]$. 
Let $F_{n}^v$ and $T_{n}^v$ denote the free and the torsion submodules of 
		$H^v_{n}(X)$. 
Then	there exists an exact sequence
	$$
	\diagram
	0\rto & H_{n}(X,R) \rto^{\bar\theta_n} & F_n^v \rto^{j\qquad} &  \bigoplus_{s=1}^q R/(v(\kappa_s^n))\rto & 0,
	\enddiagram
	$$
	where $\{\kappa^n_1,\dots,\kappa_q^n\}\dot\cup \{\kappa^n_{q+1},\dots,\kappa_{p}^n\}=K$ is a 
	distinguished bipartition of the $\kappa$-simplices of dimension $n$. Furthermore,
	$\text{\rm rnk}_{R}(H^v_{n}(X)) =\text{\rm rnk}_{\mathbb{F}}(H_{n}(X,\mathbb{F})) $ and 
	\begin{equation}\label{E:func}
	T_n^v\cong \bigoplus_{s=q+1}^{p} R/(v(\kappa^n_s)/v(\mu^{n+1}_{\alpha(s)})),
	\end{equation}
	where $\alpha \in S_{p-q}$ establishes a pairing between $\kappa_s^n$- and 
	$\mu_{\alpha(s)}^{n+1}$-simplices of dimension $n$ and $(n+1)$, respectively.
\end{theorem}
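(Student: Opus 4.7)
The plan is to combine Lemma~\ref{L:evolution3}, Theorem~\ref{T:kappa-basis}, and the structure theorem for finitely generated modules over the DVR $R=\mathbb{F}[[\pi]]$. First I would observe that $H^v_n(X)\cong H^{v_{n+1}}_n(X)$, since weights on simplices of dimension $>n+1$ do not enter the chain segment $C_{n+1}(X,R)\to C_n(X,R)\to C_{n-1}(X,R)$. Applying Lemma~\ref{L:evolution3} at dimensions $n$ and $n+1$ (the latter for the $n$-th homology) then yields the two short exact sequences
\begin{align*}
&0\to H_n(X,R)\xrightarrow{\bar\eta_n^n} H^{v_n}_n(X)\xrightarrow{j}\bigoplus_{\kappa\in K^n}R/(v(\kappa))\to 0,\\
&0\to \bigoplus_{\mu\in M^{n+1}}R/(v(\mu))\to H^{v_n}_n(X)\to H^v_n(X)\to 0,
\end{align*}
whose interaction encodes both the rank and the torsion of $H^v_n(X)$.

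Next I would realize $H^v_n(X)$ explicitly through the distinguished $\kappa/\mu$-bases. By Theorem~\ref{T:kappa-basis}, $\ker\partial_n^v$ is the free $R$-module on $\{\hat\beta_\kappa\}_{\kappa\in K^n}$ and $\mathrm{Im}\,\partial_{n+1}^v$ is free on $\{\partial_{n+1}^v(\mu)\}_{\mu\in M^{n+1}}$. Expanding $\partial_{n+1}^v(\mu)=\sum_\kappa r_{\mu,\kappa}\hat\beta_\kappa$ and exploiting that $\kappa$ appears uniquely with coefficient one in $\hat\beta_\kappa$, the entry $r_{\mu,\kappa}$ equals the $\kappa$-coefficient of the chain $\partial_{n+1}^v(\mu)$; explicitly, $r_{\mu,\kappa}=\pm v(\kappa)/v(\mu)=\pm\pi^{\omega(\kappa)-\omega(\mu)}$ when $\kappa$ is an $n$-face of $\mu$ and zero otherwise. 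Thus $H^v_n(X)=\mathrm{coker}(A)$, where $A=(r_{\mu,\kappa})$ is an $|M^{n+1}|\times |K^n|$ matrix of monomial entries. The structure theorem over $R$ then gives $H^v_n(X)\cong R^q\oplus\bigoplus_{s=1}^{p-q}R/(\pi^{d_s})$ with $p=|K^n|$ and $p-q=|M^{n+1}|$, identifying $F^v_n\cong R^q$ and $T^v_n\cong\bigoplus_s R/(\pi^{d_s})$. The rank equality $q=\rnk_\mathbb{F} H_n(X,\mathbb{F})=\rnk_R H_n(X,R)$ will follow from Theorem~\ref{T:kappa-basis}(vi), Lemma~\ref{L:Q}, and the freeness of $H_n(X,R)$ (which holds for $R=\mathbb{F}[[\pi]]$ by the universal coefficient theorem, since every $\mathrm{Tor}(\mathbb{Z}/m,R)$ is either $0$ or $R$). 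Since $H_n(X,R)$ is free, its injection into $H^v_n(X)$ (supplied by Theorem~\ref{T:functorial0}(b) with $v'=1$) factors through $F^v_n$, and matching ranks against sequence (A) identifies the cokernel as $\bigoplus_{s=1}^q R/(v(\kappa_s^n))$, yielding the first exact sequence of the theorem.

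The main obstacle will be identifying each elementary divisor $\pi^{d_s}$ explicitly as the ratio $v(\kappa_{\alpha(s)})/v(\mu_s)$ for a pairing $\alpha\in S_{p-q}$; generic Smith normal form over a PID does not deliver such closed-form invariants. To handle this I plan to employ a greedy reduction that exploits the monomial structure of $A$: at each stage select a pivot entry of minimal $\pi$-valuation, clear its row and column via unimodular row and column operations while recording the paired indices $(\mu_s,\kappa_{\alpha(s)})$, and iterate on the remaining submatrix. The delicate point to verify is that the monomial (or zero) form of $A$ is preserved under each clearing step — the differences of monomials produced must collapse to a monomial or to zero, which hinges on the face-incidence constraints between $K^n$ and $M^{n+1}$ encoded via $\omega$. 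Termination of this procedure produces both the permutation $\alpha\in S_{p-q}$ and the identification $\pi^{d_s}=v(\kappa_{\alpha(s)})/v(\mu_s)$, yielding the claimed form of $T^v_n$ and completing the proof.
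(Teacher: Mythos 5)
Your overall architecture for the first exact sequence and the rank statement parallels the paper's (the paper routes through $\varphi^n=\eta^{n+1}\circ\eta^n$ and the quotient complex $C_\bullet(X/\varphi^n)$ rather than through $H^{v_n}_n(X)$ directly, but the content is the same). Where you genuinely diverge is in the identification of the torsion. The paper never touches a presentation matrix: it compares $\mathfrak{T}^v_n$ and $\mathrm{Im}\,\partial^v_{n+1}$ via their quotients by $\theta_n(\mathrm{Im}\,\partial_{n+1})$, which Corollary~\ref{C:xxx} identifies as $\bigoplus R/(v(\kappa^n_s))$ and $\bigoplus R/(v(\mu^{n+1}_s))$ respectively, and then extracts the pairing $\alpha$ from the indecomposability of primary cyclic modules over the DVR, applied to aligned (invariant-factor) bases. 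Your route instead presents $H^v_n(X)$ as $\operatorname{coker}(A)$ for the explicit matrix $A=(r_{\mu,\kappa})$ in the $\kappa/\mu$-bases and reads the invariant factors off a tailored Smith reduction. This is more computational and arguably more algorithmic, but it stands or falls on the step you explicitly leave open.

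That step does hold, and you should close it, because without it the plan is incomplete. The saving feature is that $\deg A_{\kappa\mu}=\omega(\kappa)-\omega(\mu)$ is a difference of a row function and a column function. Write $A_{\kappa\mu}=c_{\kappa\mu}\pi^{\omega(\kappa)-\omega(\mu)}$ with $c_{\kappa\mu}\in\mathbb{F}$ (initially $c_{\kappa\mu}\in\{0,\pm1\}$). If the pivot sits at $(\kappa_0,\mu_0)$ with minimal valuation, then clearing column $\mu_1$ uses the multiplier $\lambda=(c_{\kappa_0\mu_1}/c_{\kappa_0\mu_0})\pi^{\omega(\mu_0)-\omega(\mu_1)}$, which lies in $R$ by minimality, and the new $(\kappa,\mu_1)$-entry is
$$
\bigl(c_{\kappa\mu_1}-c_{\kappa_0\mu_1}c_{\kappa\mu_0}/c_{\kappa_0\mu_0}\bigr)\,\pi^{\omega(\kappa)-\omega(\mu_1)},
$$
i.e.\ the two monomials being combined always have the \emph{same} degree, so the form is preserved with a new coefficient in $\mathbb{F}$ (possibly zero); the symmetric computation handles row operations. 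Minimality of the pivot also gives $\omega(\kappa)\ge\omega(\kappa_0)$ and $\omega(\mu_1)\le\omega(\mu_0)$ for every entry touched, so all surviving entries have valuation at least that of the pivot, yielding the divisibility chain and hence the genuine Smith normal form with pivots that are unit multiples of $v(\kappa_s)/v(\mu_{\alpha(s)})$. With this inserted your argument is sound. One smaller caveat: the assertion that the injection $H_n(X,R)\hookrightarrow H^v_n(X)$ "factors through $F^v_n$ because $H_n(X,R)$ is free" is not automatic (a free module can inject into $F\oplus T$ without landing in $F$, e.g.\ $1\mapsto(1,1)$ in $R\oplus R/(\pi)$); you need to choose the complement $F^v_n$ of $T^v_n$ so as to contain the image, which is possible since the image is free and meets $T^v_n$ trivially.
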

\begin{proof}
	By the general structure theorem of finitely generated modules over pids, we have $H^v_{n}(X)\cong 
	F^v_{n}\oplus T^v_{n}$. Furthermore, we have $H^v_{n}(X^n)\cong \mathfrak{F}^v_{n}\oplus \mathfrak{T}^v_{n}$,
	where $\mathfrak{F}^v_{n}\cong_\phi {F}^v_{n}$, $\phi(f)=f+\text{\rm Im }\partial_{n+1}^v$ and 
	$\mathfrak{T}^v_{n}/\text{\rm Im }\partial_{n+1}^v\cong T^v_n$. This follows from the diagram
$$
\diagram
0\rto &  \text{\rm Ker }(p) \rto & H^v_{n}(X^n) \drto^p\rto^{p_1} & H^v_{n}(X)\dto^{p_2} \rto &  0\\
          &                                     &                                      & F_n^v \rto  & 0.
\enddiagram        
$$	
Here $\mathfrak{T}^v_{n} =\text{\rm Ker }(p)$ and since $F_n^v$ is free, it is projective and we have 
$H^v_{n}(X^n)=\mathfrak{T}^v_{n}\oplus \mathfrak{F}^v_{n}$ with $\mathfrak{F}^v_{n}/\text{\rm Im }\partial^v_{n+1}\cong {F}^v_{n}$. Finally, by construction, we have $\text{\rm rnk}(\text{\rm Im }\partial_{n+1}^v)=\text{\rm rnk}(\mathfrak{T}_n^v)$.
	
	Let $\varphi^n=\eta^{n+1}\circ \eta^n$, we note that $\varphi^n_n=\theta_n$ since both maps
	coincide on $n$- and $(n+1)$-simplices.
	
	{\it Claim $1$.} We have the exact sequence
	$$
	\diagram
	0\rto & H_{n}(X,R) \rto^{\bar\theta_n} &H^{v_{}}_{n}(X) \rto^{j\qquad\qquad \qquad\qquad} &  
	(\bigoplus_{\kappa^n} R/(v(\kappa^n))/\bar\partial_{n+1}^{v_{n+1}} (\bigoplus_{\mu^{n+1}} R/(v(\mu_{n+1})))\rto &0  .
	\enddiagram
	$$
	By Theorem~\ref{T:functorial0} we have the long exact sequence of homology groups
	\begin{equation}\label{E:jj!}
	\diagram
	H^{v_{n+1}}_{n+1}(X/\varphi^n) \rto^{\delta_{n+1}^{v_{n+1}}} & H^{v_{n-1}}_{n}(X) \rto^{\bar\varphi_n^n} & H^{v_{n+1}}_{n}(X) \rto^j& H_{n}^{v_{n+1}}(X/\varphi^n)\dto^{\delta_n^{v_{n+1}}} \\
	& H_{n-1}^{v_{n+1}}(X/\varphi^n) & \lto^j H^{v_{n+1}}_{n-1}(X)  &  H^{v_{n-1}}_{n-1}(X) \lto^{\bar\varphi_{n-1}^n} 
	\enddiagram
	\end{equation}
	By construction $\bar\varphi_n^n=\bar\theta_n$, $H^{v_{n-1}}_{n}(X) \cong H_n(X,R)$, $H^{v_{n+1}}_{n}(X)=H^{v_{}}_{n}(X)$ and
	$H_{n-1}^{v_{n+1}}(X/\varphi^n) =0$.
	In view of 
	$
	C_{n+1}(X/\varphi^n)\cong \bigoplus_{\tau^{n+1}} R/(v(\tau^{n+1}))
	$, where the direct sum ranges over all $(n+1)$-simplices, $\tau^{n+1}$, we have
	$$
	\bar\partial_{n+1}^{v_{n+1}}(C_{n+1}(X/\varphi^n))=\bar\partial_{n+1}^{v_{n+1}}(\bigoplus_{\mu^{n+1}}R/(v(\mu^{n+1})) 
	\cong \bigoplus_{\mu^{n+1}}R/(v(\mu^{n+1})),
	$$
	where the summation ranges over all $\mu^{n+1}$-simplices which form a basis of $\partial_{n+1}^{v_{n+1}}(X)$.
	Since $C_{n-1}(X/\varphi^n)=0$ we obtain $\bar\partial_n^{v_{n+1}}\colon C_{n}(X/\varphi^n) \rightarrow 0$, where $C_{n}(X/\varphi^n)\cong \bigoplus_{\sigma^n} R/(v(\sigma^n))$.
	Using $\bar\partial_n^{v_{n+1}}\circ\bar\partial_{n+1}^{v_{n+1}} =0$, we derive
	\begin{equation}\label{E:33}
	H_{n}^{v_{n+1}}(X/\varphi^n) \cong \left(\bigoplus_{\kappa^n} R/(v(\kappa_n))/\bar\partial_{n+1}^{v_{n+1}} (\bigoplus_{\mu^{n+1}}R/(v(\mu^{n+1})))\right)\oplus \left(\bigoplus_{\mu^{n}}R/(v(\mu^{n}))\right).
	\end{equation}
	By Lemma~\ref{L:evolution3} we have the exact sequence
	$$
	\diagram
	0 \rto & \bigoplus_{\mu^{n}} R/(v(\mu^{n})) \rto^{\quad\bar\partial_n^{v_{n+1}}} & H^{v_{n-1}}_{n-1}(X) \rto^{\bar\varphi_n^n} & 
	H^{v_{n+1}}_{n-1}(X)\rto & 0,
	\enddiagram
	$$
	which combined with exactness of eq.~(\ref{E:jj!}) at $H_{n}^{v_{n+1}}(X/\varphi^n)$ and eq.~(\ref{E:33}) gives rise to the exact sequence of Claim $1$:
	$$
	\diagram
	0\rto & H_{n}(X,R) \rto^{\bar\theta_n} &H^{v}_{n}(X) \rto^{j\qquad\qquad\qquad} &  
	(\bigoplus_{\kappa^n} R/(v(\kappa^n))/\bar\partial_{n+1}^{v_{n+1}} (\bigoplus_{\mu^{n+1}} R/(v(\mu^{n+1})))\rto & 0  
	\enddiagram
	$$
	and Claim $1$ follows.
	
	We proceed by dissecting the exact sequence of Claim $1$ into the free and torsion modules.
	
	{\it Claim $2$.} We have the exact sequence
	$$
	\diagram
	0\rto & H_{n}(X,R) \rto^{\bar\theta_n} & F_n^v(X) \rto^{j\quad} &  
	\bigoplus_{s=1}^q R/(v(\kappa^n_s)) \rto & 0 ,
	\enddiagram
	$$
	and  $\text{\rm rnk}_{R}(H^v_{n}(X)) =\text{\rm rnk}_{\mathbb{F}}(H_{n}(X,\mathbb{F})) $.

	In view of $\theta_n(H_n(X,R))\subset F_n^v(X)$ and Theorem~\ref{T:functorial0}, we have 
	$$
	\diagram
	0\rto & H_n(X,R) \rto^{\bar\theta_n} & F_n^v(X).
	\enddiagram
	$$
	Furthermore, by Theorem~\ref{T:functorial0} and Corollary~\ref{C:xxx}, 
	$$
	\diagram
	0\rto & H_n(X^n,R) \rto^{\theta_n} & H^v_{n}(X^n) \rto^{j\quad} &
	\bigoplus_{\kappa^n} R/(v(\kappa^n))\rto & 0.
	\enddiagram
	$$
	By restriction, $j$ induces the surjective homomorphism
	$j\mid_{\mathfrak{F}_{n}}\colon \mathfrak{F}^v_{n} \rightarrow \bigoplus_{s=1}^q R/(v(\kappa^n_s))$ and
	$$
	\diagram
	0\rto & H_n(X,R) \rto^{\theta_n} &  F_n^v(X) \rto \dto^{\phi^{-1}} & 
	\bigoplus_{s=1}^q R/(v(\kappa^n_s))\rto & 0.\\
	&                                       & \mathfrak{F}^v_{n} \urto_{j\mid_{\mathfrak{F}^v_{n}}\quad} & &
	\enddiagram
	$$
	Since $	\bigoplus_{s=1}^q R/(v(\kappa^n_s))$ is full torsion, the exact sequence implies $\text{\rm rnk}_{R}(H^v_{n}(X)) =\text{\rm rnk}_{{R}}(H_{n}(X,R) )$. Combing with $\text{\rm rnk}_{\mathbb{F}}(H_{n}(X,\mathbb{F}))  =\text{\rm rnk}_{{R}}(H_{n}(X,R) )$ derived by  Lemma~\ref{L:Q}, we have $\text{\rm rnk}_{R}(H^v_{n}(X)) =\text{\rm rnk}_{\mathbb{F}}(H_{n}(X,\mathbb{F})) $,
	whence Claim $2$.

	{\it Claim $3$.} We have 
	$$
	T_n^v(X)\cong \bigoplus_{s=q+1}^{|K|} R/(v(\kappa^n_s)/(v(\mu^{n+1}_{\alpha(s)})).
	$$
	We consider the homomorphism embedding $\text{\rm Im }\partial^v_{n+1}$ into
	$\mathfrak{T}_n^v$.
	Since $R$ is pid, there exists a $\mathfrak{T}_n^v$-basis, $\mathfrak{B}_1=
	\{\hat{\mathfrak{t}}_{q+1},\dots \hat{\mathfrak{t}}_p\}$ and a $\text{\rm Im }\partial^v_{n+1}$-basis 
	$\mathfrak{B}_0=\{x_s\cdot \hat{\mathfrak{t}}_s\mid s=q+1,\dots,p\}$, where $x_s\in R$ represent the invariant factors.
	
	Claim $3$ follows from two observations that put these bases into context with
	Corollary~\ref{C:x} and Corollary~\ref{C:xxx}. First, since $\varphi^n_n$ elevates $n$- as well as
	$(n+1)$-simplices to their $v$-weight, we have
	$$
	\text{\rm Im }\partial^v_{n+1}/\theta_{n}(\text{\rm Im }\partial_{n+1})=\bar\partial_{n+1}^{v_{n+1}}(C_{n+1}(X/\varphi^n))\cong \bigoplus_{s=q+1}^{p} R/(v(\mu_s^{n+1})).
	$$
	Secondly, using $H^{v}_{n}(X^n)\cong \mathfrak{T}_n^v\oplus \mathfrak{F}_n^v$ and
	the commutative diagram
	$$
	\diagram
	0 \rto & H_{n}(X^n,R)\dto^{\text{\rm id}} \rto^{\theta_n} & \mathfrak{F}_n^v \dto^{\text{\rm inj}}\rto^{j\qquad} &  \bigoplus_{s=1}^q R/(v(\kappa_s^n)) \rto\dto^{\text{\rm inj}} & 0 \\
	0 \rto & H_{n}(X^n,R) \rto^{\theta_n} & H^{v}_{n}(X^n) \rto^{j\qquad} &  \bigoplus_{\kappa^n} R/(v(\kappa^n)) \rto & 0 \\
	0 \rto & \text{\rm Im }\partial_{n+1}\uto^{\text{\rm inj}} \rto^{\text{\rm res }\theta_{n}} &\uto^{\text{\rm inj}}  \mathfrak{T}_{n}^v \rto^{\text{\rm res }j\qquad} &  \bigoplus_{s=q+1}^{p}R/(v(\kappa^n_s)) \uto^{\text{\rm inj}}\rto & 0
	\enddiagram
	$$
	we arrive at
	$$
	\mathfrak{T}_{n}^v(X)/\theta_n(\text{\rm Im }\partial_{n+1})\cong \bigoplus_{s=q+1}^{p}
	R/(v(\kappa_s^n)).
	$$
	In order to see how the $\kappa_s^n$ and $\mu_s^{n+1}$ align, we consider the commutative diagram
	$$
	\diagram
	0 \rto & \text{\rm Im }\partial_{n+1}\dto^{\text{\rm id}} \rto^{\text{\rm res }\theta_{n}} &\dto^{\psi}  \mathfrak{T}_{n}^v \rto^{\text{\rm res }j\qquad} &  \bigoplus_{s=q+1}^{p}R/(v(\kappa^n_s)) \dto^{\bar\psi}\rto & 0\\
	0 \rto & \text{\rm Im }\partial_{n+1} \rto^{\text{\rm res }\theta_{n}} &  \text{\rm Im }\partial^v_{n+1} \rto^{\text{\rm res }j\qquad} &  \bigoplus_{s=q+1}^{p}R/(v(\mu^{n+1}_s)) \rto & 0
	\enddiagram
	$$
	where we extend $\psi(\mathfrak{t}_s)=x_s\cdot \mathfrak{t}_s$ linearly to an $R$-module homomorphism $\psi$.
	Choosing the $\mathfrak{T}_n^v$- and $\text{\rm Im }\partial^v_{n+1}$-bases $\mathfrak{B}_1$ and
	$\mathfrak{B}_0$, respectively, we have
	$
	\mathfrak{T}_{n}^v(X)/\theta_n(\text{\rm Im }\partial_{n+1}) \cong\sum_s 
	\langle \mathfrak{t}_s+\theta_n(\text{\rm Im }\partial_{n+1} )\rangle
	$
	as well as 
	$
	\text{\rm Im }\partial^v_{n+1}/\theta_n(\text{\rm Im }\partial_{n+1}) \cong\sum_s 
	\langle x_s\mathfrak{t}_s+\theta_n(\text{\rm Im }\partial_{n+1} )\rangle
	$. 
	Since $R$ is a discrete valuation ring,  $\langle \mathfrak{t}_s+\theta_n(\text{\rm Im }\partial_{n+1} )\rangle$ and $\langle x_s\mathfrak{t}_s+\theta_n(\text{\rm Im }\partial_{n+1} )\rangle$ are primary modules and as such indecomposable, whence
	for each $q+1\leq s\leq p$
	$$
	\langle \mathfrak{t}_s+\theta_n(\text{\rm Im }\partial_{n+1} )\rangle\cong R/(v(\kappa^n_{s_1})) \quad \text{\rm and }\quad \langle x_{s}\mathfrak{t}_{s}+\theta_n(\text{\rm Im }\partial_{n+1} )\rangle\cong R/(v(\mu^{n+1}_{s_2})). 
	$$
	By the commutativity of the right square,  
	$$
	\bar\psi( R/(v(\kappa^n_{s_1}))) \cong \bar\psi(\langle \mathfrak{t}_s+\theta_n(\text{\rm Im }\partial_{n+1} ))=\langle x_{s}\mathfrak{t}_{s}+\theta_n(\text{\rm Im }\partial_{n+1} )\rangle \cong R/(v(\mu^{n+1}_{s_2})).
	$$
	Thus  there exists some permutation $\alpha$  that pairs $\kappa^n_{s_1}$ with $\mu^{n+1}_{s_2}$ with $\alpha(s_1)=s_2$ such that
	\[
	\bar\psi( R/(v(\kappa^n_{s_1}))) \cong R/(v(\mu^{n+1}_{s_2})),
	\]
	and as a result we arrive at
	\begin{align*}
	\mathfrak{T}_{n}^v(X)/\text{\rm Im }\partial^v_{n+1} &\cong 
	\Big[ \mathfrak{T}_{n}^v(X)/\theta_n(\text{\rm Im }\partial_{n+1})\Big] \Big/ \Big[  \text{\rm Im }\partial^v_{n+1}/\theta_{n}(\text{\rm Im }\partial_{n+1})\Big] \\
	& \cong \bigoplus_{s=q+1}^{p} \Big[ R/(v(\kappa^n_s)) \Big] \Big/  \Big[  R/ (v(\mu^{n+1}_{\alpha(s)})) \Big]\\
	& \cong \bigoplus_{s=q+1}^{p} R/(v(\kappa^n_s)/v(\mu^{n+1}_{\alpha(s)})).
	\end{align*}
\end{proof}

{\bf Remark.}
In view of the structure theorem, let us revisit the weighted simplicial complex $(X,v)$ depicted in 
Figure~\ref{F:example}. Based on Theorem~\ref{T:kappa-basis}, we compute the  $K$-basis of 
$H^v_{1}(X^1)$ given by $\hat{\mathfrak{B}}^v_K=\{\hat\beta_{AC},\hat\beta_{CB},\hat\beta_{BA}\}$ 
with $K=\{AC,CB,BA \}$, where
\begin{align*}
\hat\beta_{AC} &= AC+\pi CD+\pi^2 DA  \\
\hat\beta_{CB} &= CB +\pi^4 BD+\pi^2 DC\\
\hat\beta_{BA}&=BA+\pi^4 AD+\pi^5 DB.
\end{align*}
The $\mu^2$-simplices are given by $ABC,ACD$ and thus $\partial_{2}^v (X)=\{\partial_{2}^v(ABC),\partial_{2}^v(ACD) \} $. 
By Theorem~\ref{T:structure}, we derive a partition $K=\{BA,AC\}\dot\cup \{CB\}$ 
and a pairing $\alpha\colon  \{BA,AC\} \xrightarrow{}  \{ABC,ACD\} $ with $\alpha(BA)=ABC, \alpha(AC)=ACD$.
Then the torsion of the first weighted homology $H^{v}_{1}(X)$ is given by
\[	
T_1^v \cong   R/(\pi^{\omega(BA)-\omega(ABC)})\bigoplus R/(\pi^{\omega(AC)-\omega(ACD)})\cong  R/(\pi)\oplus R/(\pi^4).
\]
Since $\rnk H^{v}_{1}(X)= \rnk H_{1}(X,R)=1$, we obtain $H^{v}_{1}(X)\cong R \oplus R/(\pi)\oplus R/(\pi^4)$.

\section{Case study: RNA bi-structures}\label{S:case}



RNA is a biomolecule that folds into a helical configuration of its sequence by forming base pairs. The most prominent class of coarse-grained structures are the RNA secondary	structures~\cite{Waterman:78s,Waterman:78aa}.
A secondary structure can be uniquely decomposed into loops and the free energy of a structure is calculated as the sum of the energy of its individual loops~\cite{Zuker:81}.
	
A  \emph{bi-structure} $(S,T)$ is a pair of secondary structures $S$ and $T$ over the same backbone. We represent a bi-structure as a diagram on a horizontal backbone with the $S$-arcs drawn in the upper and the $T$-arcs drawn in the lower half plane. 
Two arcs $(i, j)$ and $(k, l)$ are crossing if  $i<k<j<l$. 
Crossing induces an equivalence relation for which nontrivial equivalence classes are called \emph{crossing components}.
 A vertex $k$ is \emph{covered} by an arc  $(i, j)$ if $i\leq k\leq j$ and there exists no other arc $(p,q)$ such that $i<p<k<q<j$. A \emph{loop} is  the set of vertices covered by an arc $(i, j)$, in particular, the exterior loop is given by the set of vertices  covered by an artificial rainbow arc connecting the first and last vertices.
The \emph{loop complex}, $K(S,T)$,  is
 the nerve formed by $S$-loops and $T$-loops of a bi-structure $(S,T)$. 
 The loop complex $X=K(S,T)$ can be augmented by assigning a weight to each simplex of $X$, 
 where the weight  encodes the cardinality of intersections of loops in the simplex, see Fig.~\ref{F:bi}.

\begin{figure}[h]
	\centering
	\includegraphics[width=0.9\textwidth]{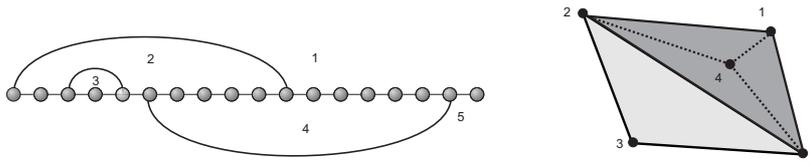}
	\caption
	{\small 
		LHS:  A  bi-structure $(S,T)$ with $S$-loops $1,2,3$ and $T$-loops $4,5$, where $1$ and $5$ are exterior loops,
		and arcs $(1, 11)$ and $(6,17)$ form a crossing component.
		RHS: its corresponding loop complex given by $K(S,T)=\{1,2,3,4,5,[1,2],[1,4],[1,5],[2,3],[2,4],[2,5], [3,5],[4,5],[1,2,4],[1,2,5],[1,4,5],[2,4,5],[2,3,5]\}$.
		The weights assigned to simplices in the loop complex encode the size of  intersections of loops in the simplex.
		While $\omega(1)=9, \omega(2)=10,\omega(3)=3,\omega(4)=12,\omega(5)=8$,
	the weights of $1$-simplices are given by 	$\omega([1,2])=\omega([2,3])=\omega([4,5])=2,\omega([1,4])=7,\omega([1,5])=3,\omega([2,4])=6,\omega([2,5])=6,\omega([3,5])=3$
		and  the weights of $2$-simplices are
		$\omega([1,2,4])= \omega([1,2,5])=\omega([1,4,5])=\omega([2,4,5])=1,\omega([2,3,5])=2$.
	}
	\label{F:bi}
\end{figure}

 \cite{Bura_weighted_21}  computed the weighted homology  for the loop complex of RNA bi-structures.
 In particular, \cite{Bura_weighted_21}  showed that the weighted simplicial complex of an arbitrary bi-structure
can be transformed via Whitehead moves~\cite{Whitehead:39} to a complex, which does not contain any $3$-simplices or $2$-simplices 
having weight greater than $1$. Referring to such complexes as lean, the following holds:

\begin{theorem}\label{T:bistr1}\cite{Bura_weighted_21}
	Let $(X,v)$ be a lean, weighted loop complex of a bi-structure $(S,T)$, where $v(\sigma)=\pi^{\omega(\sigma)}$ is given by the size of the intersection of loops. 
	Let $\hat{\mathfrak{B}}^{v}_K$ be $K$-basis of $H^{v}_{1,R}(X^1)$ and 
	$M=\complement{K}=\{\mu_s\}$ be a basis of $\partial_1^v(X)$.
	Then 
	\begin{align*}	
	H_{2}^v(X) &\cong R^C\\
	H_{1}^v(X) &\cong \oplus_{\kappa\in K} R/ (\pi^{\omega(\kappa)-1})\\
	H_{0}^v(X) &\cong R \oplus \bigoplus_{\mu_{\alpha(s)} \in M} R/ \Big(\pi^{\omega(v_{s})-\omega(\mu_{\alpha(s)} )}\Big),
	\end{align*}
	where  $C$ denotes the number of  crossing components in $(S,T)$, $v_s$ is a $0$-simplex of $X$ and
	the pairing $(v_s, \mu_{\alpha(s)})$ between $0$-simplices $v_s$
	and $1$-simplices  $\mu_{\alpha(s)} \in M$ is given by Theorem~\ref{T:structure}.
\end{theorem}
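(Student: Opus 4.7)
The plan is to apply Theorem~\ref{T:structure} dimension by dimension to the lean, weighted loop complex $(X,v)$, using three topological facts about the underlying simplicial complex $X=K(S,T)$ as the only external input: (i) $X$ is connected, so $H_0(X,R)\cong R$; (ii) $H_1(X,R)=0$; and (iii) $H_2(X,R)\cong R^C$, where $C$ is the number of crossing components of $(S,T)$. Everything else is then bookkeeping against Theorem~\ref{T:structure}.

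In dimension~$2$, leanness forces the absence of $3$-simplices, hence of $\mu^3$-simplices. The torsion formula~(\ref{E:func}) therefore collapses to $T_2^v=0$, so $H_2^v(X)=F_2^v$. The rank identity in Theorem~\ref{T:structure} combined with~(iii) gives $\rnk_R F_2^v=C$, and freeness of $F_2^v$ yields $H_2^v(X)\cong R^C$. In dimension~$1$, substituting $H_1(X,R)=0$ into the first exact sequence of Theorem~\ref{T:structure} produces $F_1^v\cong \bigoplus_{s=1}^q R/(v(\kappa_s^1))$; since $F_1^v$ is free while each $v(\kappa_s^1)=\pi^{\omega(\kappa_s^1)}\neq 0$ makes the right-hand side torsion, we force $q=0$ and $F_1^v=0$. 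Every $\kappa^1$-simplex thus pairs with some $\mu^2$-simplex under $\alpha$, and leanness gives $v(\mu^2)=\pi$ for every $\mu^2\in M$, so
$$
H_1^v(X)=T_1^v\cong \bigoplus_{\kappa\in K} R/(\pi^{\omega(\kappa)-1}).
$$

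In dimension~$0$, every $0$-simplex is a $\kappa^0$-simplex since $\partial_0^v=0$, and~(i) gives $\rnk_R F_0^v=1$, whence $F_0^v\cong R$. The number of $\mu^1$-simplices equals $|V|-1$ (the rank of $\partial_1^v$ over $\mathbb{F}$ for connected $X$), so in the notation of Theorem~\ref{T:structure} we have $p-q=|V|-1$ and $q=1$: exactly one $\kappa^0$ is absorbed into the extension producing $F_0^v\cong R$, while the remaining $|V|-1$ are paired under $\alpha$ with the $\mu^1$-simplices. Substituting the weights $v(\sigma)=\pi^{\omega(\sigma)}$ into~(\ref{E:func}) yields
$$
T_0^v\cong \bigoplus_{\mu_{\alpha(s)}\in M} R/(\pi^{\omega(v_s)-\omega(\mu_{\alpha(s)})}),
$$
and combining with $F_0^v\cong R$ gives the claimed formula for $H_0^v(X)$.

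The main obstacle is the topological input~(iii). Facts~(i) and~(ii) are standard contractibility-type statements for lean bi-structure loop complexes, but identifying $H_2$ with the number of crossing components requires a geometric argument showing that each crossing component of $(S,T)$ contributes exactly one independent $2$-cycle in the nerve $K(S,T)$. This identification is already available in~\cite{Bura_weighted_21}, and the present proof reorganizes that work by isolating~(iii) as the sole nonformal input, with all remaining torsion and rank computations delegated to Theorem~\ref{T:structure}.
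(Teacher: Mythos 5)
Your proposal is correct and follows essentially the same route as the paper: both take the integral homology of the loop complex ($H_0\cong\mathbb{Z}$, $H_1=0$, $H_2\cong\mathbb{Z}^C$) from \cite{Bura_weighted_21} as the sole external input and then apply the rank identity and the torsion/pairing formula of Theorem~\ref{T:structure} dimension by dimension, using leanness to kill $T_2^v$ and to set $v(\mu^2)=\pi$. Your version merely spells out a few steps the paper leaves implicit (the freeness-versus-torsion argument forcing $q=0$ in dimension $1$, and the count $p-q=|V|-1$, $q=1$ in dimension $0$), so no substantive difference.
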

This result can  be derived from our structure theorem as follows:
\begin{proof}
	For simplicial homology with integer coefficients, \cite{Bura_weighted_21} proved that the loop complex of a bi-structure satisfy $H_2(X)=\mathbb{Z}^C$, 
	$H_1(X)=0$ and    $H_0(X)=\mathbb{Z}$. Combing with 	$\text{\rm rnk}_{R}(H^v_{n}(X)) =\text{\rm rnk}_{\mathbb{F}}(H_{n}(X,\mathbb{F})) $ by Theorem~\ref{T:structure}, we have $\rnk H_{2}^v(X)  =C$, 
	$\rnk 	H_{1}^v(X)  =0$ and $\rnk H_{0}^v(X)  =1$.

	Since the lean complex $X$ contains no $3$-simplices, $H_{2}^v(X) $ is free, whence $H_{2}^v(X) \cong R^{C}$.
	
	Let $\{\delta \mid \delta \in \Delta \}$ denote the set of $2$-simplices in $X$. Since $\rnk H_{1}^v(X)=0$, 
	Theorem~\ref{T:structure} shows there exists
	a bijection $p$  between the set  $K$ of $1$-simplices $\kappa$ and the set of $2$-simplices,
	i.e., the pairings $( \kappa_{i},\delta_{p(i)})$ for each $ \kappa_{i}\in K$.
	Since each $2$-simplex in a lean complex has weight $1$, i.e. $v(\delta)=\pi$,
	we have $\frac{v(\kappa_{i})}{v(\delta_{p(i)})}= \pi^{\omega(\kappa_{i})-1}$.
	Theorem~\ref{T:structure} establishes 
	that $H_{1}^v(X)\cong \bigoplus_{\kappa\in K} R/ (\pi^{\omega(\kappa)-1})$.

	Similarly, Theorem~\ref{T:structure} provides the pairing $\alpha$ between $0$-simplices $v_s$
	and $1$-simplices  $\mu_{\alpha(s)} \in M$, i.e., $(v_s, \mu_{\alpha(s)})$.
	Consequently, the torsion of $H_{0}^v(X)$ is given by $T_0^v\cong \bigoplus_{\mu_{\alpha(s)} \in M} R/ 
	\Big(\pi^{\omega(v_{s})-\omega(\mu_{\alpha(s)} )}\Big)$, completing the proof.
\end{proof}

{\bf Remark.} We can extend the above analysis to $\tau$-structures~\cite{Li:21}, which can be viewed as RNA-RNA interaction structures and generalize bi-structures.

\section{Declarations}

Funding - The authors received no financial support for the research, authorship, and/or publication of this article.

Conflicts of interest/Competing interests - None.

Availability of data and material - Non Applicable.

Code availability - Non Applicable.

\begin{acknowledgements}
	We gratefully acknowledge the comments and discussions from Andrei Bura, Qijun He and Fenix Huang.
\end{acknowledgements}

\bibliographystyle{spmpsci} 
\bibliography{ref.bib}

\begin{thebibliography}{10}
\providecommand{\url}[1]{{#1}}
\providecommand{\urlprefix}{URL }
\expandafter\ifx\csname urlstyle\endcsname\relax
  \providecommand{\doi}[1]{DOI~\discretionary{}{}{}#1}\else
  \providecommand{\doi}{DOI~\discretionary{}{}{}\begingroup
  \urlstyle{rm}\Url}\fi

\bibitem{Bura_weighted_21}
Bura, A., He, Q., Reidys, C.: Weighted {Homology} of {Bi}-{Structures} over
  {Certain} {Discrete} {Valuation} {Rings}.
\newblock Mathematics \textbf{9}(7), 744 (2021)

\bibitem{Bura:21}
Bura, A.C., He, Q., Reidys, C.M.: Loop homology of bi-secondary structures.
\newblock Discrete Mathematics \textbf{344}(6), 112371 (2021)

\bibitem{Carlsson:09}
Carlsson, G.: Topology and data.
\newblock Bull. Amer. Math. Soc. \textbf{46}, 255--308 (2009)

\bibitem{Dawson:90}
Dawson, R.J.M.: Homology of weighted simplicial complexes.
\newblock Cahiers de Topologie et G\'{e}om\'{e}trie Diff\'{e}rentielle
  Cat\'{e}goriques \textbf{31}(3), 229--243 (1990).
\newblock Publisher: Dunod\'{e}diteur, publi\'{e} avec le concours du CNRS

\bibitem{softwareNeelav}
Dutta, N.: {Software: Computing weighted simplicial homology} (2022).
\newblock
  \urlprefix\url{https://biocomplexity.virginia.edu/institute/divisions/mathematical-biocomplexity}.
\newblock {Online}; accessed 24 April 2022

\bibitem{Ebli:20}
Ebli, S., Defferrard, M., Spreemann, G.: Simplicial {Neural} {Networks}.
\newblock arXiv:2010.03633 [cs, math, stat]  (2020)

\bibitem{Giusti:16}
Giusti, C., Ghrist, R., Bassett, D.S.: Two’s company, three (or more) is a
  simplex.
\newblock Journal of Computational Neuroscience \textbf{41}(1), 1--14 (2016)

\bibitem{Li:21}
Li, T.J.X., Reidys, C.M.: On the {Loop} {Homology} of a {Certain} {Complex} of
  {RNA} {Structures}.
\newblock Mathematics \textbf{9}(15), 1749 (2021)

\bibitem{Lin:05}
Lin, T.Y., Chiang, I.J.: A simplicial complex, a hypergraph, structure in the
  latent semantic space of document clustering.
\newblock International Journal of Approximate Reasoning \textbf{40}(1), 55--80
  (2005)

\bibitem{Moore:12}
Moore, T.J., Drost, R.J., Basu, P., Ramanathan, R., Swami, A.: Analyzing
  collaboration networks using simplicial complexes: {A} case study.
\newblock In: 2012 {Proceedings} {IEEE} {INFOCOM} {Workshops}, pp. 238--243
  (2012)

\bibitem{Ramanathan:11}
Ramanathan, R., Bar-Noy, A., Basu, P., Johnson, M., Ren, W., Swami, A., Zhao,
  Q.: Beyond graphs: {Capturing} groups in networks.
\newblock In: 2011 {IEEE} {Conference} on {Computer} {Communications}
  {Workshops} ({INFOCOM} {WKSHPS}), pp. 870--875 (2011)

\bibitem{Ren:18}
Ren, S., Wu, C., Wu, J.: Weighted persistent homology.
\newblock Rocky Mountain Journal of Mathematics \textbf{48}(8), 2661--2687
  (2018)

\bibitem{Waterman:78aa}
Smith, T.F., Waterman, M.S.: {RNA} secondary structure.
\newblock Math. Biol. \textbf{42}, 31--49 (1978)

\bibitem{Spivak:09}
Spivak, D.I.: Higher-dimensional models of networks.
\newblock arXiv:0909.4314 [cs]  (2009)

\bibitem{Wasserman:18}
Wasserman, L.: Topological {Data} {Analysis}.
\newblock Annual Review of Statistics and Its Application \textbf{6}(1),
  405--431 (2019)

\bibitem{Waterman:78s}
Waterman, M.: Secondary structure of single-stranded nucleic acids.
\newblock In: G.C. Rota (ed.) Studies on foundations and combinatorics,
  Advances in mathematics supplementary studies, vol.~1, pp. 167--212. Academic
  Press N.Y. (1978)

\bibitem{Whitehead:39}
Whitehead, J.H.C.: Simplicial spaces, nuclei and m-groups.
\newblock Proceedings of the London Mathematical Society \textbf{s2}(45),
  243--327 (1939)

\bibitem{Wu1:19}
Wu, C., Ren, S., Wu, J., Xia, K.: Weighted ({Co})homology and {Weighted}
  {Laplacian}.
\newblock arXiv:1804.06990 [math]  (2019).
\newblock ArXiv: 1804.06990

\bibitem{Zomorodian:04}
Zomorodian, A., Carlsson, G.: Computing {Persistent} {Homology}.
\newblock Discrete \& Computational Geometry \textbf{33}(2), 249--274 (2005)

\bibitem{Zuker:81}
Zuker, M., Stiegler, P.: Optimal computer folding of larger {RNA} sequences
  using thermodynamics and auxiliary information.
\newblock Nucleic Acids Res. \textbf{9}, 133--148 (1981)

\end{thebibliography}

\end{document}